\title[On subadditivity of the logarithmic Kodaira dimension]
{On subadditivity of the logarithmic Kodaira dimension}
\author{Osamu Fujino} 
\date{2016/2/1, version 0.30}
\keywords{logarithmic Kodaira dimension, 
Nakayama's numerical Kodaira dimension, 
affine varieties, Iitaka conjecture, $\omega$-sheaf, $\widehat \omega$-sheaf, 
minimal model program, abundance conjecture}
\subjclass[2010]{Primary 14R05; Secondary 14E30}
\address{Department of Mathematics, Graduate School of Science, 
Kyoto University, Kyoto 606-8502, Japan}
\email{fujino@math.kyoto-u.ac.jp}
\newcommand{\rank}[0]{{\operatorname{rank}}}
\newcommand{\Supp}[0]{{\operatorname{Supp}}}
\newtheorem{thm}{Theorem}[section]
\newtheorem{lem}[thm]{Lemma}
\newtheorem{cor}[thm]{Corollary}
\newtheorem{prop}[thm]{Proposition}
\newtheorem{conj}[thm]{Conjecture}
\theoremstyle{definition}
\newtheorem{defn}[thm]{Definition}
\newtheorem{rem}[thm]{Remark}
\newtheorem{ex}[thm]{Example}
\newtheorem*{ack}{Acknowledgments} 
\newtheorem{say}[thm]{}
\begin{document}

\maketitle 

\begin{abstract}
We reduce Iitaka's subadditivity 
conjecture for the logarithmic Kodaira dimension 
to a special case of the generalized abundance conjecture 
by establishing an Iitaka type inequality for Nakayama's 
numerical Kodaira dimension. 
Our proof heavily depends on Nakayama's 
theory of $\omega$-sheaves and 
$\widehat{\omega}$-sheaves. 
As an application, we prove the subadditivity of the logarithmic 
Kodaira dimension for affine varieties by using 
the minimal model program for projective klt pairs with 
big boundary divisor. 
\end{abstract}

\tableofcontents 

\section{Introduction}

In this paper, we discuss Iitaka's subadditivity conjecture on the logarithmic 
Kodaira dimension $\overline \kappa$. 

\begin{conj}[Subadditivity of logarithmic Kodaira dimension]\label{f-conj1.1}
Let $g:V\to W$ be a dominant morphism between algebraic varieties. 
Then we have the following inequality 
$$
\overline \kappa (V)\geq \overline\kappa (F')+\overline \kappa (W)
$$ 
where $F'$ is an irreducible component of a sufficiently general 
fiber of $g:V\to W$. 
\end{conj}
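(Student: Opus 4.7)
The plan is to reduce Conjecture \ref{f-conj1.1} to a numerical analogue that can be attacked by Nakayama's theory of $\omega$- and $\widehat\omega$-sheaves, and then to invoke (a special case of) the generalized abundance conjecture to pass from numerical to Iitaka dimensions. First, by replacing $V$ and $W$ with smooth projective compactifications, I choose simple normal crossing boundary divisors $D$ on $X\supseteq V$ and $E$ on $Y\supseteq W$ such that $g$ extends to a surjective morphism $f:X\to Y$ with $f^{-1}(E)\subseteq D$, $(X,D)$ and $(Y,E)$ log smooth, and $\overline\kappa(V)=\kappa(X,K_X+D)$, $\overline\kappa(W)=\kappa(Y,K_Y+E)$. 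After further birational modifications making $f$ sufficiently nice, a general fiber component $F$ carries a boundary $D_F:=D|_F$ with $\overline\kappa(F')=\kappa(F,K_F+D_F)$.

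The first main step is to prove the corresponding inequality
\[
\nu(X,K_X+D)\ \geq\ \nu(F,K_F+D_F)+\kappa(Y,K_Y+E)
\]
for Nakayama's numerical Kodaira dimension $\nu$. Here I would use the fact that, for $m$ sufficiently divisible, $f_*\mathcal{O}_X(m(K_{X/Y}+D))$ is an $\widehat\omega$-sheaf in Nakayama's sense and therefore inherits weak positivity after twisting by a line bundle pulled back from $Y$. Combining this with Nakayama's characterization of $\nu$ via asymptotic growth of sections of twists $H^0(X,m(K_X+D)+f^{\ast}A)$, the sections on $Y$ extracted from $K_Y+E$ multiply with sections on a general fiber computing $\nu(F,K_F+D_F)$ to produce enough sections on $X$ to bound $\nu(K_X+D)$ from below as claimed.

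The second step is to upgrade this numerical inequality to an inequality for $\overline\kappa$. One always has $\kappa(X,K_X+D)\leq \nu(X,K_X+D)$, so the missing ingredient is equality on the fiber, namely $\kappa(F,K_F+D_F)=\nu(F,K_F+D_F)$. This is precisely a special case of the generalized (log) abundance conjecture for the log smooth pair $(F,D_F)$; granting it, the two inputs combine to give $\overline\kappa(V)\geq \overline\kappa(F')+\overline\kappa(W)$.

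The main obstacle is clearly this last step: the generalized abundance conjecture is open in general, and the numerical Iitaka inequality by itself does not suffice. However, when $D_F$ is big on $F$ --- which is automatic, for example, when $V$ is affine, since the boundary at infinity is then big on every general fiber component --- the required equality $\kappa=\nu$ follows from the minimal model program for klt pairs with big boundary of Birkar--Cascini--Hacon--McKernan: running a $(K_F+D_F)$-MMP with scaling of an ample divisor produces a good minimal model on which semiampleness, and hence $\kappa=\nu$, is known. This renders the argument unconditional for affine varieties, while in the general projective-boundary setting it yields a reduction of Iitaka's conjecture for $\overline\kappa$ to this special case of generalized abundance.
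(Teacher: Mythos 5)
Your overall architecture mirrors the paper's: first an Iitaka-type inequality for Nakayama's numerical dimension $\kappa_\sigma$ via $\omega$- and $\widehat\omega$-sheaves, then a bridge between $\kappa_\sigma$ and $\kappa$ supplied by abundance/MMP, with the affine case made unconditional by BCHM. But there is a genuine logical gap in your second step: you apply the bridge to the wrong variety. From the numerical inequality
$$
\kappa_\sigma(X,K_X+D)\ \geq\ \kappa_\sigma(F,K_F+D_F)+\kappa_\sigma(Y,K_Y+E)
$$
the fiber and base terms already go the right way for free, since $\kappa_\sigma\geq\kappa$ always holds; so $\kappa_\sigma(F,K_F+D_F)\geq\kappa(F,K_F+D_F)=\overline\kappa(F')$ and likewise on $Y$ cost nothing. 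The only place where the hard direction is needed is on the \emph{total space}: you must convert the lower bound on $\kappa_\sigma(X,K_X+D)$ into a lower bound on $\kappa(X,K_X+D)=\overline\kappa(V)$, and since $\kappa(X,K_X+D)\leq\kappa_\sigma(X,K_X+D)$ goes the wrong way, you need the equality $\kappa(X,K_X+D)=\kappa_\sigma(X,K_X+D)$ on $X$ (this is exactly Conjecture \ref{f-conj1.4} for the pair $(X,D)$). Your proposal instead declares the missing ingredient to be $\kappa(F,K_F+D_F)=\kappa_\sigma(F,K_F+D_F)$ on the fiber; even granting that, your chain of inequalities does not close, because you still only bound $\kappa_\sigma(X,K_X+D)$ from below, not $\kappa(X,K_X+D)$.

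The same misdirection propagates to your affine-case argument: you run the $(K_F+D_F)$-MMP on the fiber, whereas what is needed (and what Proposition \ref{f-prop4.1} provides) is to choose the compactification so that the boundary of $(X,D)$ supports a big (indeed nef and big) divisor coming from an ample divisor on the closure of $V$ in projective space, perturb it to a klt pair with big boundary, and run the MMP with scaling on $(X,D)$ itself. BCHM then gives a good minimal model or a Mori fiber space for $(X,D)$, hence $\kappa(X,K_X+D)=\kappa_\sigma(X,K_X+D)$, which is the equality actually used. Your observation that the boundary is big on general fiber components of an affine $V$ may well be true, but it is applied to the wrong pair and does not repair the argument.
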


Conjecture \ref{f-conj1.1} is usually called 
Conjecture $\overline{C}_{n, m}$ 
when $\dim V=n$ and $\dim W=m$. 
If $V$ is complete in Conjecture \ref{f-conj1.1}, then 
it is nothing but the famous Iitaka subadditivity conjecture for 
the Kodaira dimension $\kappa$. 
We see that Conjecture \ref{f-conj1.1} is equivalent to: 

\begin{conj}\label{f-conj1.2}
Let $f:X\to Y$ be a surjective morphism between smooth projective 
varieties with connected fibers. 
Let $D_X$ $($resp.~$D_Y$$)$ be a simple normal crossing 
divisor on $X$ $($resp.~$Y$$)$. 
Assume that $\Supp f^*D_Y\subset \Supp D_X$. 
Then we have 
$$
\kappa(X, K_X+D_X)\geq \kappa
(F, K_F+D_X|_F)+\kappa(Y, K_Y+D_Y)
$$
where $F$ is a sufficiently general fiber of $f:X\to Y$. 
\end{conj}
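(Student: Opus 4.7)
The plan is to prove the two implications by translating between open varieties and their log smooth compactifications, using a Stein factorization to absorb the difference between ``irreducible component of a general fiber'' in Conjecture~\ref{f-conj1.1} and ``general fiber of a morphism with connected fibers'' in Conjecture~\ref{f-conj1.2}. Three classical facts about $\overline\kappa$ will be used throughout: (i) for smooth $U$ with SNC compactification $X \supset U$ and $D = X \setminus U$, one has $\overline\kappa(U) = \kappa(X, K_X+D)$; (ii) $\overline\kappa$ does not decrease under a finite surjective morphism $U' \to U$; (iii) $\overline\kappa$ does not decrease under an open immersion $U \hookrightarrow U'$.

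For the implication \emph{Conjecture~\ref{f-conj1.2} $\Rightarrow$ Conjecture~\ref{f-conj1.1}}, I first replace $V$ and $W$ by smooth models (which preserves $\overline\kappa$) and choose smooth projective SNC-compactifications $X \supset V$, $Y \supset W$ with boundaries $D_X$, $D_Y$. Resolving indeterminacies of $g$, I may assume $g$ extends to a morphism $\tilde f : X \to Y$; the inclusion $\Supp \tilde f^* D_Y \subset \Supp D_X$ is automatic from $\tilde f^{-1}(W) = V$. Since $\tilde f$ need not have connected fibers, I take its Stein factorization $\tilde f = h \circ \phi$ with $\phi : X \to Y'$ of connected fibers and $h : Y' \to Y$ finite, desingularize $Y'$, and enlarge the boundary to an SNC divisor $D_{Y'}$ containing $h^{-1}(D_Y)$. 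Applying Conjecture~\ref{f-conj1.2} to $\phi : X \to Y'$ yields
$$
\kappa(X, K_X + D_X) \;\geq\; \kappa(F, K_F + D_X|_F) + \kappa(Y', K_{Y'} + D_{Y'}).
$$
A sufficiently general fiber $F$ of $\phi$ is smooth and connected, hence irreducible, so $F \cap V$ is irreducible and coincides with an irreducible component $F'$ of the corresponding general fiber of $g$. Fact (i) then identifies the three terms above with $\overline\kappa(V)$, $\overline\kappa(F')$, and $\overline\kappa(W')$, where $W' := Y' \setminus D_{Y'}$. Since $W' \subset h^{-1}(W)$ and $h^{-1}(W) \to W$ is finite surjective, (ii) and (iii) give $\overline\kappa(W') \geq \overline\kappa(W)$, completing this direction.

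For the converse \emph{Conjecture~\ref{f-conj1.1} $\Rightarrow$ Conjecture~\ref{f-conj1.2}}, I set $V := X \setminus D_X$ and $W := Y \setminus D_Y$; the hypothesis $\Supp f^* D_Y \subset \Supp D_X$ gives $f(V) \subset W$, so $f$ restricts to a dominant $g : V \to W$. A general fiber $F$ of $f$ is smooth and connected, hence irreducible, so $F \cap V = F \setminus D_X|_F$ is irreducible and is the unique component $F'$ of the corresponding general fiber of $g$. Applying Conjecture~\ref{f-conj1.1} to $g$ and translating via (i) gives exactly the inequality of Conjecture~\ref{f-conj1.2}.

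The main obstacle I anticipate is the bookkeeping in the forward direction: one must arrange, after Stein factorization and desingularization, that the enlarged boundary $D_{Y'}$ is SNC while containing $h^{-1}(D_Y)$, and that the identification of an irreducible component of a general fiber of $g$ with a general fiber of $\phi$ really survives restriction to $V$. These are delicate but routine applications of resolution of singularities and the properties of Stein factorization, and no new ingredient beyond the classical Iitaka toolkit appears to be necessary.
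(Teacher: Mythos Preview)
Your proposal is correct and follows essentially the same route as the paper. The paper treats the equivalence tersely: the direction Conjecture~\ref{f-conj1.1}~$\Rightarrow$~Conjecture~\ref{f-conj1.2} is dismissed in one line (``by putting $V=X\setminus D_X$ and $W=Y\setminus D_Y$''), and for Conjecture~\ref{f-conj1.2}~$\Rightarrow$~Conjecture~\ref{f-conj1.1} the reader is referred to the proof of Corollary~\ref{f-cor1.6}, which carries out exactly your compactification / Stein factorization / resolution diagram and uses Lemma~\ref{f-lem2.14} (the compactified form of your facts (ii)--(iii)) to compare $\overline\kappa(W)$ with $\kappa(Y,K_Y+D_Y)$ on the intermediate variety.
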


One of the main purposes of this paper is to prove: 

\begin{thm}[Main theorem]\label{f-thm1.3}
Let $f:X\to Y$ be a surjective morphism between smooth projective 
varieties with connected fibers. 
Let $D_X$ $($resp.~$D_Y$$)$ be a simple normal crossing 
divisor on $X$ $($resp.~$Y$$)$. 
Assume that $\Supp f^*D_Y\subset \Supp D_X$. 
Then we have 
$$
\kappa _{\sigma}(X, K_X+D_X)\geq \kappa _{\sigma} 
(F, K_F+D_X|_F)+\kappa_{\sigma}(Y, K_Y+D_Y)
$$
where $F$ is a sufficiently general fiber of $f:X\to Y$. 
\end{thm}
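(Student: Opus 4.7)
The plan adapts Viehweg's strategy for proving subadditivity of Kodaira dimensions, replacing the classical weak positivity of $f_{*}\omega_{X/Y}^{\otimes m}$ by the stronger numerical positivity carried by Nakayama's $\omega$-sheaves and $\widehat{\omega}$-sheaves; this refinement is precisely what upgrades a $\kappa$-statement to a $\kappa_{\sigma}$-statement. Set $L = K_X + D_X$ and $M = K_Y + D_Y$, and record the decomposition $L = (K_{X/Y} + D_X - f^{*}D_Y) + f^{*}M$. Writing $k = \kappa_{\sigma}(F, K_F + D_X|_F)$ and $r = \kappa_{\sigma}(Y, M)$, the goal is a lower bound
$$
h^{0}\bigl(X,\, mL + A\bigr) \;\gtrsim\; m^{\,k+r}
$$
along a cofinal set of $m$, for a fixed sufficiently ample divisor $A$ on $X$; by definition this is exactly $\kappa_{\sigma}(X, L) \geq k + r$.

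The core step is to control the relative direct image
$$
\mathcal{F}_{m} \;=\; f_{*}\mathcal{O}_X\bigl(m(K_{X/Y} + D_X - f^{*}D_Y) + A_X\bigr),
$$
where $A_X$ is a fixed sufficiently ample divisor on $X$ chosen so that the $\kappa_{\sigma}$-growth on the fiber becomes visible as honest sections, namely $h^{0}(F, m(K_F + D_X|_F) + A_X|_F) \gtrsim m^{k}$ along a cofinal set of $m$. The hypothesis $\Supp f^{*}D_Y \subset \Supp D_X$ is exactly what makes the pair $(X/Y,\, D_X - f^{*}D_Y)$ behave like a relatively log canonical pair with effective horizontal boundary, so that Nakayama's direct-image positivity theorems apply and endow $\mathcal{F}_{m}$ with the structure of an $\widehat{\omega}$-sheaf on $Y$. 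Generic base change then gives $\rank \mathcal{F}_{m} = h^{0}(F, m(K_F + D_X|_F) + A_X|_F) \gtrsim m^{k}$.

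By the projection formula, $\mathcal{F}_{m} \otimes \mathcal{O}_Y(mM) \cong f_{*}\mathcal{O}_X(mL + A_X)$, so for any ample divisor $A_Y$ on $Y$,
$$
h^{0}\bigl(X,\, mL + A_X + f^{*}A_Y\bigr) \;=\; h^{0}\bigl(Y,\, \mathcal{F}_{m} \otimes \mathcal{O}_Y(mM + A_Y)\bigr).
$$
Applying the Iitaka-type inequality for $\kappa_{\sigma}$ to the $\widehat{\omega}$-sheaf $\mathcal{F}_{m}$, together with $\kappa_{\sigma}(Y, M) = r$ and the generic-rank bound $\rank \mathcal{F}_{m} \gtrsim m^{k}$, produces a product-type lower bound of order $m^{k+r}$ for the right-hand side. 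Setting $A = A_X + f^{*}A_Y$ then gives the desired estimate.

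The principal technical obstacle is the first step: standard weak-positivity theorems of Kawamata, Koll\'ar, Viehweg, and Campana cover the case where the relative log boundary is effective, but here the vertical part of $D_X - f^{*}D_Y$ may have negative coefficients, so one must invoke Nakayama's refined $\widehat{\omega}$-sheaf machinery, with careful tracking of rounding and of behavior under pullback to log resolutions of vertical singular fibers. A secondary but essential point is that the final multiplicative $m^{k}\cdot m^{r}$ bound emerges only from the $\widehat{\omega}$-sheaf formalism (an ordinary weakly-positive sheaf would give only $m^{r}$); this is precisely the Iitaka-type inequality for $\kappa_{\sigma}$ referenced in the abstract, and once it is in place the remaining rank and ampleness estimates are standard.
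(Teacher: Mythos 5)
Your outline follows essentially the same route as the paper: decompose $K_X+D_X=(K_{X/Y}+D_X-f^*D_Y)+f^*(K_Y+D_Y)$, show that the direct images $f_*\mathcal O_X(m(K_{X/Y}+D_X-f^*D_Y)+A)$ carry Nakayama's $\widehat\omega$-sheaf positivity with a generic-generation twist that is \emph{uniform in} $m$, and then multiply the fiberwise rank growth $\sim m^{k}$ against the base growth $\sim m^{r}$. Two points, however, need attention.

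First, there is a genuine gap at the final counting step. You propose to establish $\rank \mathcal F_m\gtrsim m^{k}$ ``along a cofinal set of $m$'' and to combine it with $h^0(Y,mM+A_Y-H)\gtrsim m^{r}$, which a priori also holds only along a cofinal set (that is all the $\limsup$ in the definition of $\sigma(D;A)$ gives you). The product bound $m^{k+r}$ is then only available on the \emph{intersection} of the two sets, which need not be cofinal --- it could even be empty. This is precisely the gap the paper identifies in Nakayama's own proof of \cite[V.4.1.~Theorem]{nakayama} (see Remark \ref{f-rem3.8}), and it is repaired there by Lemma \ref{f-lem2.8} (Lehmann, Eckl): for a sufficiently ample $A$ one has $C_1m^{\kappa_\sigma}\leq h^0(X,\mathcal O_X(mD+A))\leq C_2m^{\kappa_\sigma}$ for \emph{every} sufficiently large $m$, so both factors are large simultaneously. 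Without invoking such a two-sided estimate your argument does not close.

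Second, the step you flag as the ``principal technical obstacle'' is indeed where almost all the work lies, and your proposal leaves it as an assertion. The paper does not apply a direct-image positivity theorem to $(X/Y,D_X-f^*D_Y)$ as it stands: it first passes to an equidimensional toroidal model with weak semistable reduction (Abramovich--Karu), replaces the boundary by $\Delta_X=\Supp(D_X+f^*\Delta_Y)$ so that after the Kawamata cover the relative boundary $\Sigma_{X'}-f'^*\Sigma_{Y'}$ becomes effective, proves the $\widehat\omega$-property for $P=k(K_{X/Y}+\Delta_X-f^*\Delta_Y)$ (Theorem \ref{f-thm3.4}), and then transfers the section count to $D=k(K_{X/Y}+D_X-f^*D_Y)$ via a generically isomorphic injection $f_*\mathcal O_X(mP+A)\hookrightarrow f_*\mathcal O_X(mD+A)$. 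The crucial output is that the ample twist achieving generic global generation depends only on $Y$ and $\Delta_Y$, not on $m$ (Corollary \ref{f-cor3.7}); if that uniformity is not secured, the factor $\rank\mathcal F_m$ cannot be extracted. Your plan correctly names the difficulty (negative vertical coefficients of $D_X-f^*D_Y$) but does not supply the mechanism that resolves it.
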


Note that $\kappa_{\sigma}$ denotes Nakayama's numerical 
Kodaira dimension and that the inequality 
$\kappa_{\sigma}\geq \kappa$ always holds, where 
$\kappa$ is Iitaka's $D$-dimension. 
Theorem \ref{f-thm1.3} is a variant of 
Nakayama's theorem (see 
\cite[V.4.1.~Theorem]{nakayama} and Remark \ref{f-rem3.8}). 
By Theorem \ref{f-thm1.3}, Conjecture \ref{f-conj1.2} is reduced to: 

\begin{conj}\label{f-conj1.4} 
Let $X$ be a smooth projective variety and let $D_X$ be 
a simple normal crossing divisor on $X$. 
Then the equality 
$$
\kappa _\sigma(X, K_X+D_X)=\kappa (X, K_X+D_X) 
$$
holds. 
\end{conj}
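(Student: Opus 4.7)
\emph{Proof proposal.} The plan is to attack Conjecture \ref{f-conj1.4} through the standard framework of the log minimal model program combined with the log abundance conjecture. The pair $(X, D_X)$ is log smooth, hence in particular divisorially log terminal, so the general machinery of the MMP for lc pairs applies.

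As a preliminary reduction, if $K_X + D_X$ is not pseudo-effective, then by definition $\kappa_\sigma(X, K_X + D_X) = -\infty$ and $\kappa(X, K_X + D_X) = -\infty$ as well, so the equality holds trivially. We may therefore assume that $K_X + D_X$ is pseudo-effective.

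Next, I would run a $(K_X + D_X)$-MMP. Granting termination of flips for dlt pairs, this yields a birational contraction $\phi: X \dashrightarrow X'$ to a minimal model, on which $K_{X'} + D_{X'}$, with $D_{X'} = \phi_* D_X$, is nef. Both $\kappa$ and $\kappa_\sigma$ are preserved under such birational contractions for dlt pairs, which can be verified using Nakayama's $\sigma$-decomposition together with the negativity lemma to control the behavior along divisors contracted by $\phi$. The crux of the argument is then the log abundance conjecture: a nef log canonical divisor on a dlt pair should be semi-ample. Once semi-ampleness is established, the equality $\kappa_\sigma = \kappa$ is immediate, as both invariants coincide with the dimension of the image of the associated Iitaka fibration.

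The main obstacle is of course the log abundance conjecture itself, one of the deepest open problems in birational geometry. It is known in dimension at most three, and in arbitrary dimension when the boundary contains a big $\mathbb{R}$-divisor, via the MMP for klt pairs with big boundary and the base-point-free theorem; termination of flips is likewise unconditional only in the big-boundary setting. Consequently, the unconditional consequence one can realistically extract is Conjecture \ref{f-conj1.4} under a bigness hypothesis on $D_X$, which, combined with Theorem \ref{f-thm1.3}, should yield the subadditivity of $\overline\kappa$ for affine varieties, matching the application advertised in the abstract.
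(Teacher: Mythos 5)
Your proposal is correct and matches the paper's treatment: the statement is an open conjecture (a special case of the generalized abundance conjecture), and the paper likewise only establishes it conditionally via the MMP plus abundance for $(X, D_X)$ (Proposition \ref{f-prop4.3}, giving the case $\dim X\leq 3$) and unconditionally in the big-boundary setting arising from affine varieties via \cite{bchm} (Proposition \ref{f-prop4.1}), exactly as you describe. Your direct handling of the non-pseudo-effective case (both sides are $-\infty$) is a harmless minor shortcut compared with the paper's appeal to a Mori fiber space structure.
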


Conjecture \ref{f-conj1.4} is known as 
a special case of the generalized abundance 
conjecture (see Conjecture \ref{f-conj2.10}), which is one of the 
most important conjectures for higher-dimensional algebraic varieties. 
As an easy corollary of Theorem \ref{f-thm1.3}, we have: 

\begin{cor}\label{f-cor1.5} 
In Theorem \ref{f-thm1.3}, 
we further assume that $\dim X\leq 3$. 
Then we have 
\begin{align*}
\kappa (X, K_X+D_X)&=\kappa_\sigma(X, K_X+D_X)\\
&\geq \kappa _\sigma(F, K_F+D_X|_F)+\kappa_\sigma(Y, K_Y+D_Y)\\
&\geq \kappa (F, K_F+D_X|_F)+\kappa (Y, K_Y+D_Y). 
\end{align*}
In particular, if $g:V\to W$ is a dominant morphism between algebraic 
varieties with $\dim V\leq 3$, then we have the inequality 
$$
\overline \kappa (V)\geq \overline \kappa (F')+\overline \kappa (W)
$$
where $F'$ is an irreducible component of a sufficiently general fiber of 
$g:V\to W$. 
\end{cor}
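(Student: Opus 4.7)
The plan is to chain together three ingredients: Theorem \ref{f-thm1.3} for the middle inequality, the universal bound $\kappa_\sigma \geq \kappa$ for the last inequality, and known cases of the generalized abundance conjecture for the top equality.

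First, the middle inequality is a direct invocation of Theorem \ref{f-thm1.3}; no new input is needed here. The bottom inequality follows termwise from the universal comparison $\kappa_\sigma(Z,L)\geq \kappa(Z,L)$ applied to the pairs $(F,K_F+D_X|_F)$ and $(Y,K_Y+D_Y)$ (vacuous in the $-\infty$ case, and otherwise a standard fact about Nakayama's numerical dimension). So the only thing requiring real justification inside the display is the top equality.

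For the top equality I would invoke Conjecture \ref{f-conj1.4} in the cases where it is known, namely $\dim X\leq 3$. In dimensions one and two this is classical log-abundance; in dimension three, log-abundance for log canonical threefolds (Keel--Matsuki--McKernan, together with work of Kawamata and Shokurov) yields the dichotomy that either $K_X+D_X$ is not pseudo-effective---in which case both $\kappa$ and $\kappa_\sigma$ equal $-\infty$---or else a log minimal model has semi-ample log canonical divisor, forcing the numerical and Iitaka dimensions to agree; since both invariants are birational for log pairs, the equality descends back to $(X,K_X+D_X)$.

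For the ``In particular'' statement I would pass from the given dominant $g:V\to W$ to a morphism $f:X\to Y$ between smooth projective varieties satisfying the hypotheses of Theorem \ref{f-thm1.3}. The standard recipe is to take Nagata compactifications of $V$ and $W$, resolve so that $V$ (resp.\ $W$) is the complement of an SNC divisor $D_X$ (resp.\ $D_Y$) in a smooth projective $X$ (resp.\ $Y$); resolve indeterminacies and apply Stein factorization so that $g$ extends to a surjective $f:X\to Y$ with connected fibers (possibly passing through a finite cover of $Y$, under which $\overline\kappa(W)$ only grows); finally blow $X$ up further so that $\Supp f^*D_Y\subset \Supp D_X$ while preserving the SNC condition. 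Since $\overline\kappa$ of a variety equals the log Kodaira dimension of any such SNC completion, and since the general fiber $F$ of $f$ restricts the log structure to the component $F'$ of the general fiber of $g$, the displayed chain translates directly into $\overline\kappa(V)\geq \overline\kappa(F')+\overline\kappa(W)$. The main obstacle is thus the confirmation of Conjecture \ref{f-conj1.4} for $\dim X\leq 3$, which rests on the highly nontrivial log-abundance theorem for threefolds; once granted, the rest of the corollary is formal.
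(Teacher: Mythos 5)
Your proposal is correct and follows essentially the same route as the paper: the middle inequality is Theorem \ref{f-thm1.3}, the bottom one is the universal bound $\kappa_\sigma\geq\kappa$, and the top equality is exactly the content of Proposition \ref{f-prop4.3} (MMP plus log abundance in dimension $\leq 3$), which is what the paper cites. Your reduction for the ``in particular'' statement (Nagata compactification, resolution, Stein factorization with $\overline\kappa(W)$ only growing under the generically finite cover, cf.\ Lemma \ref{f-lem2.14}) matches the argument the paper carries out in the proof of Corollary \ref{f-cor1.6}.
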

Note that the 
equality 
$\kappa _\sigma(X, K_X+D_X)=\kappa (X, K_X+D_X)$ in 
Corollary \ref{f-cor1.5} follows 
from the minimal model program and 
the abundance theorem for $(X, D_X)$ (see Proposition \ref{f-prop4.3}). 
We also note that Corollary \ref{f-cor1.5} is 
new when $\dim V=3$ and $\dim W=1$. 
Anyway, Conjecture \ref{f-conj1.1} now becomes a consequence of 
the minimal model program and the abundance conjecture by Theorem 
\ref{f-thm1.3} (see Remark \ref{f-rem4.5}). This fact strongly 
supports Conjecture \ref{f-conj1.1}. 

As an application of Theorem \ref{f-thm1.3}, we obtain: 

\begin{cor}[Subadditivity of the logarithmic Kodaira dimension 
for affine varieties]\label{f-cor1.6}
Let $g:V\to W$ be a dominant morphism from an affine variety $V$. 
Then we have 
$$\overline \kappa(V)\geq \overline\kappa (F')+\overline \kappa(W)
$$
where $F'$ is an irreducible component of 
a sufficiently general fiber of $g:V\to W$. 
\end{cor}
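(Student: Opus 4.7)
The plan is to reduce Corollary~\ref{f-cor1.6} to the setting of Theorem~\ref{f-thm1.3} and then upgrade the resulting $\kappa_\sigma$-inequality to one for $\kappa$ by exploiting the affineness of $V$. First, I would choose a smooth projective compactification $V\hookrightarrow X$ with $D_X:=X\setminus V$ a simple normal crossing divisor, and analogously $W\hookrightarrow Y$ with SNC boundary $D_Y$. After Hironaka and additional blow-ups we may assume $g$ extends to a surjective morphism $f:X\to Y$ with connected fibers such that $\Supp f^*D_Y\subset \Supp D_X$. Applying Theorem~\ref{f-thm1.3} supplies
\[
\kappa_\sigma(X,K_X+D_X)\geq \kappa_\sigma(F,K_F+D_X|_F)+\kappa_\sigma(Y,K_Y+D_Y)
\]
for a sufficiently general fiber $F$ of $f$; for such $F$, the Zariski-open set $F\cap V=F\setminus(F\cap D_X)$ sits inside the smooth connected $F$, is therefore irreducible, and coincides with the component $F'$ appearing in the statement.

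The crucial input from the hypothesis is that, $V$ being affine, the divisor $D_X$ supports an ample divisor on $X$ and is in particular big. Since $F'=F\cap V$ is closed in the affine $V$ it is itself affine, so $D_X|_F$ supports an ample divisor on $F$ and is likewise big. Exploiting bigness on $X$, write $D_X=\Delta_\epsilon+\epsilon A$ with $A=\sum a_iD_i$ an ample $\mathbb{Q}$-divisor supported on $D_X$, so that $\Delta_\epsilon=\sum(1-\epsilon a_i)D_i$ has SNC support and coefficients in $[0,1)$ for small $\epsilon>0$. Choosing a general effective $A'\sim_\mathbb{Q}\epsilon A$, the pair $(X,\Delta_\epsilon+A')$ is klt with big boundary and $K_X+\Delta_\epsilon+A'\sim_\mathbb{Q} K_X+D_X$. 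By the minimal model program together with log abundance for projective klt pairs with big boundary (the BCHM package), $K_X+\Delta_\epsilon+A'$ admits a good minimal model; since both $\kappa$ and $\kappa_\sigma$ are birational invariants and agree on semi-ample divisors, this forces $\kappa_\sigma(X,K_X+D_X)=\kappa(X,K_X+D_X)$. The identical argument applied to $(F,D_X|_F)$ gives $\kappa_\sigma(F,K_F+D_X|_F)=\kappa(F,K_F+D_X|_F)$.

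Combining these equalities with Theorem~\ref{f-thm1.3} and the trivial inequality $\kappa_\sigma(Y,K_Y+D_Y)\geq \kappa(Y,K_Y+D_Y)$ yields
\[
\kappa(X,K_X+D_X)\geq \kappa(F,K_F+D_X|_F)+\kappa(Y,K_Y+D_Y),
\]
which translates, via the definitions of $\overline\kappa(V)$, $\overline\kappa(F')$, and $\overline\kappa(W)$, into the required inequality. The main obstacle is the middle step: one must verify carefully that bigness of $D_X$ really produces a klt perturbation whose log canonical class is $\mathbb{Q}$-linearly equivalent to $K_X+D_X$, and that the MMP-with-scaling plus log abundance machinery for klt pairs with big boundary can be invoked to force the on-the-nose equality $\kappa=\kappa_\sigma$ (and not merely an equality of numerical dimensions); the parallel step for the fiber hinges on the delicate but essentially standard observation that for sufficiently general $F$ the set $F\cap V$ is a genuine fiber of $g$ and hence inherits affineness from $V$.
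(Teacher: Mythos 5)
Your overall strategy is the paper's: reduce to Theorem \ref{f-thm1.3} and then upgrade $\kappa_\sigma$ to $\kappa$ on the total space by perturbing $D_X$ to a klt pair with big boundary and invoking \cite{bchm}; this is exactly Proposition \ref{f-prop4.1} (Gongyo's observation) combined with the proof of Corollary \ref{f-cor1.6}. Two of your refinements are harmless but unnecessary: you never need the equality $\kappa_\sigma(F,K_F+D_X|_F)=\kappa(F,K_F+D_X|_F)$ on the fiber, since the chain only uses $\kappa_\sigma(F,\cdot)\geq\kappa(F,\cdot)=\overline\kappa(F')$, which is automatic; and the claim that $D_X$ supports an \emph{ample} divisor on $X$ is too strong for a compactification obtained after blow-ups --- what you actually get is the pullback of an ample divisor from the closure of $V$ in $\mathbb P^N$, i.e.\ a big and semi-ample divisor supported on $D_X$, which is all the klt-big-boundary perturbation requires (and is what Proposition \ref{f-prop4.1} uses).

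The genuine gap is in your very first reduction: you assert that after blow-ups $g$ extends to a surjective morphism $f:X\to Y$ \emph{with connected fibers}, where $Y$ compactifies $W$ itself. This is false whenever the general fiber of $g$ is disconnected --- which is precisely the situation the statement anticipates by speaking of an irreducible component $F'$ of a general fiber. To get connected fibers you must insert the Stein factorization $V'\to W''\to W'$, and then $Y$ is a resolution of $W''$, which is only \emph{generically finite} over a compactification of $W$, not birational to one. Consequently $\kappa(Y,K_Y+D_Y)$ is no longer $\overline\kappa(W)$ by definition; one needs the comparison $\kappa(Y,K_Y+D_Y)\geq\overline\kappa(W)$ for generically finite morphisms with $\Supp f^*D_Y\subset\Supp D_X$, which the paper supplies as Lemma \ref{f-lem2.14} (via the inclusion $f^*\Omega^n_Y(\log D_Y)\subset\Omega^n_X(\log D_X)$). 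Without this step your final translation ``via the definitions of $\overline\kappa(W)$'' does not go through. Once the Stein factorization and Lemma \ref{f-lem2.14} are inserted, your argument matches the paper's proof.
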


Note that $W$ is not necessarily assumed to be 
affine in Corollary \ref{f-cor1.6}. 
In order to prove Corollary \ref{f-cor1.6}, we construct $(X, D_X)$ 
with $\overline \kappa (V)=\kappa (X, K_X+D_X)$ such that 
$(X, D_X)$ has a good minimal model 
or a Mori fiber space structure by using the minimal model 
program for projective klt pairs with big boundary divisor. 
Note that $\kappa _\sigma(X, K_X+D_X)=\kappa (X, K_X+D_X)$ holds for 
such $(X, D_X)$.  

\begin{rem}\label{f-rem1.7}
By the proof of Corollary \ref{f-cor1.6}, we see that 
the inequality 
$$\overline \kappa (V)\geq \overline\kappa (F')+\overline \kappa (W)$$ holds 
for every {\em{strictly}} rational dominant map 
$g:V\dashrightarrow W$ from an affine variety $V$. 
\end{rem}

In this paper, we 
use Nakayama's theory of $\omega$-sheaves and $\widehat \omega$-sheaves 
in order to prove an Iitaka type inequality for Nakayama's numerical 
Kodaira dimension (see Theorem \ref{f-thm1.3}). 
It is closely related to Viehweg's clever covering trick and weak positivity. 
We also use the minimal model program for 
projective klt pairs with big boundary divisor for the study of affine varieties 
(see Section \ref{f-sec4}). 

\begin{rem}\label{f-rem1.8}
Let $f:X\to Y$ be a projective surjective morphism 
between smooth projective varieties with connected fibers. 
In \cite{kawamata}, Kawamata proved that 
the inequality 
$$\kappa (X)\geq \kappa (F)+\kappa (Y),$$
where $F$ is a sufficiently general fiber of $f:X\to Y$, 
holds under the assumption that 
the geometric generic fiber $X_{\overline \eta}$ of 
$f:X\to Y$ has a good minimal model. 
His approach is completely different from ours. 
For the details, see \cite{kawamata}.  
\end{rem}

Finally the following theorem, which is a 
slight generalization of Theorem \ref{f-thm1.3}, was 
suggested by the referee.  

\begin{thm}\label{f-thm1.9}
Let $f:X\to Y$ be a proper surjective morphism 
from a normal variety $X$ onto a smooth 
complete variety $Y$ with connected fibers. 
Let $D_X$ be an effective $\mathbb Q$-divisor 
on $X$ such that $(X, D_X)$ is lc 
and let $D_Y$ be a simple normal crossing divisor on $Y$. 
Assume that $\Supp f^*D_Y\subset \lfloor D_X\rfloor$, 
where $\lfloor D_X\rfloor$ is the round-down of $D_X$. 
Then we have 
$$
\kappa _\sigma (X, K_X+D_X)\geq 
\kappa _\sigma(F, K_F+D_X|_F)+\kappa _\sigma 
(Y, K_Y+D_Y) 
$$ 
where $F$ is a sufficiently general fiber of $f:X\to Y$. 
\end{thm}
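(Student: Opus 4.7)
The plan is to reduce Theorem \ref{f-thm1.9} to an essentially formal extension of Theorem \ref{f-thm1.3} by taking a log resolution of $(X,D_X)$. The reduction rests on two standard invariances of $\kappa_{\sigma}$: it is preserved under adding an effective exceptional divisor to the log canonical class, and it is a birational invariant of smooth complete varieties, permitting the replacement of $Y$ by a smooth projective modification.

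First, I would use Chow's lemma together with Hironaka resolution to produce a smooth projective variety $Y'$ and a birational morphism $g\colon Y'\to Y$ for which $D_{Y'}:=g^{-1}_{*}D_Y+\mathrm{Exc}(g)$ is SNC. Because $(Y,D_Y)$ is snc, the difference $K_{Y'}+D_{Y'}-g^{*}(K_Y+D_Y)$ is effective and $g$-exceptional, so $\kappa_{\sigma}(Y,K_Y+D_Y)=\kappa_{\sigma}(Y',K_{Y'}+D_{Y'})$. Passing from $X$ to the normalization of the main component of $X\times_Y Y'$ (and renaming), this reduces us to the situation where $Y$ is smooth projective.

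Next, take a log resolution $\pi\colon X'\to X$ of $(X,D_X+f^{*}D_Y)$ with $X'$ smooth projective. Using the lc assumption on $(X,D_X)$, I would write
$$
K_{X'}+D_{X'}=\pi^{*}(K_X+D_X)+E,
$$
where $D_{X'}$ is the effective SNC $\mathbb Q$-divisor whose support equals $\pi^{-1}(\Supp D_X)\cup\mathrm{Exc}(\pi)$, with the original coefficient $d_i\in[0,1]$ on the strict transform of each component of $D_X$ and coefficient $1$ on each $\pi$-exceptional prime divisor, and $E$ is effective and $\pi$-exceptional. The hypothesis $\Supp f^{*}D_Y\subset\lfloor D_X\rfloor$ then yields $\Supp(f\circ\pi)^{*}D_Y\subset\lfloor D_{X'}\rfloor$. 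Exceptionality of $E$ gives $\kappa_{\sigma}(X',K_{X'}+D_{X'})=\kappa_{\sigma}(X,K_X+D_X)$, and restriction to a sufficiently general fiber $F'$ of $f\circ\pi$ (which is smooth and lies over a general fiber $F$ of $f$) gives $\kappa_{\sigma}(F',K_{F'}+D_{X'}|_{F'})=\kappa_{\sigma}(F,K_F+D_X|_F)$ by the same argument.

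At this point the situation differs from that of Theorem \ref{f-thm1.3} only in that $D_{X'}$ is an SNC $\mathbb Q$-divisor with coefficients in $[0,1]$ rather than a reduced SNC divisor. I expect this to be the principal step of the proof: one must verify that the argument behind Theorem \ref{f-thm1.3} — which rests on Nakayama's theory of $\omega$- and $\widehat{\omega}$-sheaves together with a Viehweg-type covering and weak positivity argument — goes through verbatim in the SNC $\mathbb Q$-boundary setting. Since Nakayama's framework is naturally formulated for $\mathbb Q$-boundaries, this should amount to careful bookkeeping (preserving the round-down condition under the various birational operations and tracking the fractional coefficients through the covering construction) rather than any new positivity input. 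Granting the fractional analogue of Theorem \ref{f-thm1.3} applied to $X'\to Y'$, one obtains
$$
\kappa_{\sigma}(X',K_{X'}+D_{X'})\geq\kappa_{\sigma}(F',K_{F'}+D_{X'}|_{F'})+\kappa_{\sigma}(Y',K_{Y'}+D_{Y'}),
$$
and the three $\kappa_{\sigma}$-invariances established above translate this directly into the inequality of Theorem \ref{f-thm1.9}.
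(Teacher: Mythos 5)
Your proposal is correct and follows essentially the same route as the paper's own (sketched) proof: after reducing to smooth projective models with SNC boundary, everything hinges on extending Theorem \ref{f-thm1.3} (equivalently Theorem \ref{f-thm3.4}) to boundaries whose horizontal part has coefficients in $[0,1]\cap\mathbb Q$, and the paper likewise leaves exactly that extension as the one point to check, pinpointing it as the existence of the generically isomorphic injection \eqref{eq3.1} when the horizontal part of $\Delta_X$ is not reduced. The only minor difference is that the paper first rounds down the vertical part of $D_X$ (replacing $D_X$ by $D^{\mathrm{h}}_X+\lfloor D^{\mathrm{v}}_X\rfloor$) so that only the horizontal coefficients remain fractional, whereas you carry fractional vertical coefficients through the argument.
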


The formulation of Theorem \ref{f-thm1.9} seems to be 
natural and useful from the minimal model theoretic 
viewpoint. 

We summarize the contents of this paper. 
In Section \ref{f-sec2}, we briefly recall Iitaka's logarithmic Kodaira dimension, 
Nakayama's numerical Kodaira dimension, Nakayama's $\omega$-sheaves and 
$\widehat {\omega}$-sheaves, and some related topics. 
In Section \ref{f-sec3}, 
we prove Theorem \ref{f-thm1.3}, which is the main theorem of this paper. 
Our proof heavily depends on Nakayama's argument in his book \cite{nakayama}, 
which is closely related to Viehweg's covering trick and weak positivity. 
In Section \ref{f-sec4}, 
we discuss the minimal model program for affine varieties. 
For any affine variety, we see that there is a smooth compactification which has 
a good minimal model or a Mori fiber space structure. 
As an application, we obtain the subadditivity of the logarithmic Kodaira 
dimension for affine varieties by 
Theorem \ref{f-thm1.3} (see Corollary \ref{f-cor1.6}). 

\begin{ack}
The author was partially supported by Grant-in-Aid for 
Young Scientists (A) 24684002 from JSPS. 
He thanks Professors Yoshinori Gongyo, Takeshi Abe, 
Noboru Nakayama, and the referee for 
useful and helpful comments. He thanks Universit\'e Lille 1 for 
its hospitality. 
He would like to thank Professor 
De-Qi Zhang for pointing out a mistake. 
Finally, he thanks Professors Thomas Eckl and 
Brian Lehmann for answering his questions on various 
numerical dimensions. 
\end{ack}

We will work over $\mathbb C$, the complex number field, throughout this paper. 
For the standard notation of the minimal model program, see \cite{fujino-fund} 
and \cite{fujino-foundation}. 

\section{Preliminaries}\label{f-sec2}

In this section, we quickly explain the logarithmic Kodaira dimension 
introduced by Iitaka, 
Nakayama's numerical Kodaira dimension, 
$\omega$-sheaves, and $\widehat \omega$-sheaves. 

\begin{say}[Sufficiently general fibers] 
Let us recall the definition of sufficiently general fibers for the reader's 
convenience. 

\begin{defn}[Sufficiently general fibers] 
Let $f:X\to Y$ be a morphism between algebraic varieties. 
Then a {\em{sufficiently general fiber}} $F$ of $f:X\to Y$ 
means that $F=f^{-1}(y)$ where $y$ is 
any point contained in a countable intersection of 
nonempty Zariski open 
subsets of $Y$. 
\end{defn}
A sufficiently general fiber is sometimes called a 
{\em{very general fiber}} in the literature. 
\end{say}

\begin{say}[Logarithmic Kodaira dimension]
The notion of the logarithmic Kodaira dimension was introduced by 
Shigeru Iitaka (see \cite{iitaka}). 

\begin{defn}[Logarithmic Kodaira dimension]\label{f-def2.4}
Let $V$ be an irreducible algebraic variety. 
By Nagata's theorem, we have a complete algebraic variety 
$\overline V$ which contains $V$ as a dense Zariski open subset. 
By Hironaka's theorem, we have a smooth projective 
variety $\overline W$ and a projective 
birational morphism $\mu:\overline W\to \overline V$ such that 
if $W=\mu^{-1}(V)$, then $\overline D=\overline W-W=\mu^{-1}(\overline V-V)$ 
is a simple normal crossing divisor on $\overline W$. 
The {\em{logarithmic Kodaira dimension}} $\overline \kappa(V)$ of 
$V$ is defined as 
$$
\overline \kappa(V)=\kappa(\overline W, K_{\overline {W}}+\overline D) 
$$
where $\kappa$ denotes Iitaka's $D$-dimension. 
\end{defn}
It is well-known and is 
easy to see that $\overline \kappa(V)$ is well-defined, that is, 
it is independent of the choice of the pair $(\overline W, \overline D)$. 

As we have already explained, 
the following conjecture (see Conjecture \ref{f-conj1.1}) is usually called Conjecture 
$\overline C_{n, m}$ when 
$\dim V=n$ and $\dim W=m$. 

\begin{conj}[Subadditivity of logarithmic Kodaira dimension]\label{f-conj2.5}
Let $g:V\to W$ be a dominant morphism between algebraic varieties. 
Then we have the following inequality 
$$
\overline \kappa (V)\geq \overline\kappa (F')+\overline \kappa (W)
$$ 
where $F'$ is an irreducible component of a sufficiently general 
fiber of $g:V\to W$. 
\end{conj}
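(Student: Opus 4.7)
The plan is to follow the paper's strategy and reduce Conjecture~\ref{f-conj2.5} to a special case of the generalized abundance conjecture via the Iitaka-type inequality for Nakayama's numerical Kodaira dimension. The argument has three stages: a compactification/resolution reduction to the smooth projective log setting, the application of the numerical inequality of Theorem~\ref{f-thm1.3}, and an appeal to abundance to convert $\kappa_\sigma$ back to $\kappa$.

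First I would reduce to the form of Conjecture~\ref{f-conj1.2}. Given $g:V\to W$, Nagata's theorem and Hironaka's resolution of singularities and of indeterminacies yield smooth projective compactifications, and after taking a Stein factorization one obtains a surjective morphism $f:X\to Y$ with connected fibers between smooth projective varieties. Let $D_X$ and $D_Y$ be simple normal crossing divisors supporting the boundaries of the compactifications together with any exceptional divisors of the modifications; by enlarging $D_X$ if necessary one may arrange $\Supp f^*D_Y\subset \Supp D_X$. By the definition of the logarithmic Kodaira dimension one has $\overline\kappa(V)=\kappa(X,K_X+D_X)$ and $\overline\kappa(W)=\kappa(Y,K_Y+D_Y)$; after possibly replacing $W$ by the normalization in the function field of $V$ so that the irreducible component $F'$ is recovered as a full fiber, one likewise has $\overline\kappa(F')=\kappa(F,K_F+D_X|_F)$ for a sufficiently general fiber $F$ of $f$.

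Second, apply Theorem~\ref{f-thm1.3} directly to obtain
$$
\kappa_\sigma(X,K_X+D_X)\geq \kappa_\sigma(F,K_F+D_X|_F)+\kappa_\sigma(Y,K_Y+D_Y).
$$
Granting Conjecture~\ref{f-conj1.4} on each of the three pairs $(X,D_X)$, $(F,D_X|_F)$, $(Y,D_Y)$, every $\kappa_\sigma$ above coincides with the corresponding $\kappa$, and so
$$
\kappa(X,K_X+D_X)\geq \kappa(F,K_F+D_X|_F)+\kappa(Y,K_Y+D_Y),
$$
which is the desired inequality $\overline\kappa(V)\geq \overline\kappa(F')+\overline\kappa(W)$.

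The main obstacle is of course the third step: Conjecture~\ref{f-conj1.4} is a form of the generalized abundance conjecture and is not known in general, so the argument sketched is conditional on abundance. Unconditional cases arise precisely when abundance is available for the pairs in question. When $\dim V\leq 3$, the log minimal model program and the log abundance theorem in dimension $\leq 3$ suffice, yielding Corollary~\ref{f-cor1.5}. When $V$ is affine, one uses the minimal model program for projective klt pairs with big boundary divisor to produce a smooth compactification $(X,D_X)$ admitting either a good minimal model or a Mori fiber space structure, for which $\kappa_\sigma=\kappa$ is known, yielding Corollary~\ref{f-cor1.6}. Outside of such favourable situations, the conjectural abundance input is the truly hard step.
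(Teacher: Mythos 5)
Your proposal matches the paper's own treatment: the statement is a conjecture which the paper does not prove unconditionally, but reduces--exactly as you do--to Conjecture~\ref{f-conj1.2} via compactification and Stein factorization, then to the abundance-type statement Conjecture~\ref{f-conj1.4} by means of Theorem~\ref{f-thm1.3} (see Remark~\ref{f-rem4.5} and the proof of Corollary~\ref{f-cor1.6}). You correctly identify both the conditional nature of the argument and the unconditional cases ($\dim V\leq 3$ and $V$ affine) that the paper establishes.
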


Note that Conjecture \ref{f-conj1.2} is a special case of 
Conjecture \ref{f-conj2.5} by putting $V=X\setminus D_X$ and $W=Y\setminus D_Y$. 
On the other hand, we can easily 
check that Conjecture \ref{f-conj2.5} follows from Conjecture 
\ref{f-conj1.2}. 
For the details, see the proof of Corollary \ref{f-cor1.6}. 
Anyway, Conjecture \ref{f-conj2.5} (see Conjecture \ref{f-conj1.1}) is 
equivalent to Conjecture \ref{f-conj1.2}. 
We note that Conjecture \ref{f-conj1.2} is easier to handle than Conjecture 
\ref{f-conj2.5} from the 
minimal model theoretic viewpoint. 
\end{say}

\begin{say}[Nakayama's numerical Kodaira dimension]
Let us recall the definition of 
Nakayama's numerical Kodaira dimension. 

\begin{defn}[Nakayama's numerical Kodaira dimension]\label{f-def2.7} 
Let $X$ be a smooth projective variety and let $D$ be a Cartier divisor 
on $X$. We put 
$$
\sigma(D; A)=\max\left\{k\in \mathbb Z_{\geq 0}\,\left| 
\,\underset{m\to \infty}{\limsup} \frac{\dim H^0(X, \mathcal O_X
(A+mD))}
{m^k}>0\right.\right\}
$$
and 
$$
\kappa_\sigma(X, D)=\max\{\sigma(D; A)\, |\, {\text{$A$ is a divisor}}\}. 
$$
Note that if $H^0(X, \mathcal O_X(A+mD))\ne 0$ only for 
finitely many 
$m\in \mathbb Z_{\geq 0}$ then we define $\sigma(D; A)=-\infty$. 
It is obvious that $\kappa_\sigma(X, D)\geq \kappa(X, D)$, 
where $\kappa(X, D)$ denotes Iitaka's $D$-dimension of $D$. 
We also note that $\kappa_\sigma(X, D)\geq 0$ if and 
only if $D$ is pseudo-effective (see \cite[V.1.4.~Corollary]{nakayama}).  

When $X$ is a normal projective 
variety, we take a resolution $\varphi: X'\to X$, 
where $X'$ is a smooth projective variety, 
and put 
$$
\kappa_\sigma(X, D)=\kappa_{\sigma}(X', \varphi^*D). 
$$ 
It is not difficult to see that 
$\kappa_\sigma(X, D)$ is well-defined and has various good properties. 
For the details, see \cite[V.\S2]{nakayama}, \cite{lehmann} 
and \cite{eckl}. 
\end{defn}

The following lemma, which is lacking 
in \cite{nakayama}, will play a crucial 
role in the proof of Theorem \ref{f-thm1.3}. 

\begin{lem}[{\cite[Theorem 6.7 (7)]{lehmann}}]\label{f-lem2.8} 
Let $D$ be a pseudo-effective 
Cartier divisor on a smooth 
projective variety $X$. 
We fix some sufficiently ample Cartier divisor 
$A$ on $X$. 
Then there exist positive constants $C_1$ and $C_2$ 
such that 
\begin{equation*}
C_1m^{\kappa_{\sigma}(X, D)}\leq 
\dim H^0(X, \mathcal O_X(mD+A))\leq 
C_2m^{\kappa _{\sigma}(X, D)}
\end{equation*} 
for every sufficiently large $m$. 
\end{lem}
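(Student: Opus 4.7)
The plan is to establish the lower and upper bounds separately; write $k := \kappa_\sigma(X,D)$.

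For the upper bound, I would argue by induction on $n = \dim X$, slicing with a general smooth $H \in |A|$ (since we may assume $A$ very ample). The short exact sequence
$$0 \to \mathcal{O}_X(mD) \to \mathcal{O}_X(mD+A) \to \mathcal{O}_H\bigl((mD+A)|_H\bigr) \to 0$$
gives $h^0(X, mD+A) \leq h^0(X, mD) + h^0(H, (mD+A)|_H)$. The first summand is either eventually zero (if $\kappa(X,D) = -\infty$) or grows like $m^{\kappa(X,D)}$, which is $O(m^k)$ since $\kappa(X,D) \leq k$. The second is $O(m^{\kappa_\sigma(H, D|_H)})$ by the inductive hypothesis applied to $(H, A|_H)$, after noting that $A|_H$ is automatically sufficiently ample on $H$ once $A$ is. A standard restriction inequality $\kappa_\sigma(H, D|_H) \leq k$ for general $H$ then closes the induction.

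For the lower bound, the definition of $\kappa_\sigma$ produces a divisor $A_0$ and a subsequence $m_i \nearrow \infty$ with $h^0(m_i D + A_0) \geq c \, m_i^k$ for some $c > 0$. Choosing $A$ sufficiently ample that $A - A_0$ is effective immediately gives $h^0(m_i D + A) \geq c \, m_i^k$ along the subsequence $\{m_i\}$. The remaining task is to interpolate to every sufficiently large $m$: for $m_i \leq m < m_{i+1}$ one writes $mD + A = (m_i D + A_0) + \bigl((m - m_i)D + (A - A_0)\bigr)$, and when the correction term is effective, multiplication by a nonzero section produces an injection $H^0(m_i D + A_0) \hookrightarrow H^0(mD + A)$, transferring the bound from $m_i$ to $m$.

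The main obstacle is precisely this interpolation. The $\limsup$ in the definition of $\kappa_\sigma$ only guarantees growth along a possibly sparse subsequence, whereas the lemma demands uniform top-order growth for every large $m$. To make this work, one must absorb a controlled number of multiples of the pseudo-effective class $D$ into the sufficiently ample class $A$—which is not automatic since pseudo-effective divisors need not themselves be effective—and exploit some approximate concavity of the function $m \mapsto h^0(mD + A)$. The cleanest route, carried out in detail by Lehmann, is to reformulate $\kappa_\sigma$ via positive intersection products and derive both estimates simultaneously from Siu-type numerical inequalities, bypassing the delicate subsequence-to-sequence interpolation altogether.
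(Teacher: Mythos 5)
First, a point of comparison: the paper does not prove Lemma \ref{f-lem2.8} at all. It is imported verbatim from Lehmann \cite[Theorem 6.7 (7)]{lehmann}, with the supporting details (and a repair of a gap in \cite{lehmann}, see Remark \ref{f-rem2.9}) in Eckl \cite[2.8, 2.10, and Theorem 0.2]{eckl}. So your proposal is being measured against those references, and as it stands it does not replace them: the lower bound ends with an interpolation step that you yourself concede fails (the correction term $(m-m_i)D+(A-A_0)$ has no reason to be effective when $D$ is merely pseudo-effective and the $m_i$ are sparse), followed by a deferral to Lehmann. Deferring to the very source being quoted is not a proof. Note also that the upper bound is genuinely nontrivial content: the $\limsup$ definition of $\kappa_\sigma$ only gives $h^0(mD+A)=o(m^{k'})$ for every $k'>k$, which does not exclude growth like $m^k\log m$; so the bound $C_2m^k$ cannot be waved through.

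The upper bound argument contains a concrete error: the ``standard restriction inequality'' $\kappa_\sigma(H, D|_H)\leq \kappa_\sigma(X,D)$ for a general member $H\in |A|$ is false. Take $X$ the blow-up of $\mathbb P^2$ at a point and $D=E$ the exceptional curve. Then $\kappa_\sigma(X,E)=0$ (indeed $P_\sigma(E)=0$), but for a general curve $H\in|A|$ one has $\deg (E|_H)=E\cdot A>0$, so $\kappa_\sigma(H,E|_H)=1$. Your induction then only yields $h^0(X,mE+A)=O(m)$, not the required $O(1)$. The obstruction is the divisorial negative part $N_\sigma(D)$ of Nakayama's $\sigma$-decomposition, which contributes nothing to $h^0(mD+A)$ on $X$ but becomes big after restriction to $H$; any slicing argument must first strip off $N_\sigma(D)$ (using $h^0(mD+A)=h^0(mP_\sigma(D)+A)$ up to rounding) before restricting, and controlling $\kappa_\sigma$ of the restricted positive part is exactly where the positive-intersection-product or Siu-type machinery of \cite{lehmann} and \cite{eckl} enters. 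In short: both halves of your argument have genuine gaps at precisely the points you would need the cited results, so the proposal reduces to the citation the paper already makes.
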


For the details, see \cite[Theorem 6.7 (7)]{lehmann} and 
\cite[2.8, 2.10, and Theorem 0.2]{eckl}. 

\begin{rem}\label{f-rem2.9}
Nakayama's numerical Kodaira dimension can be defined for $\mathbb R$-Cartier 
$\mathbb R$-divisors and has many equivalent definitions and 
several nontrivial characterizations. 
For the details, see \cite[V.\S2]{nakayama}, 
\cite[Theorem 1.1]{lehmann}, and \cite[Theorem 0.2]{eckl}. 
Note that \cite[2.9]{eckl} describes a gap in 
Lehmann's paper \cite{lehmann}. 
\end{rem}

The following conjecture is one of the most important 
conjectures for 
higher-dimensional algebraic varieties. 
Conjecture \ref{f-conj1.4} is a special case of Conjecture \ref{f-conj2.10}. 

\begin{conj}[Generalized abundance conjecture]\label{f-conj2.10}
Let $(X, \Delta)$ be a $\mathbb Q$-factorial projective 
dlt pair. 
Then $\kappa_\sigma(X, K_X+\Delta)=\kappa_\iota(X, K_X+\Delta)$.
\end{conj}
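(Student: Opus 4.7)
The plan is to reduce the equality to the twin pillars of higher-dimensional birational geometry: the minimal model program and the abundance conjecture. The inequality $\kappa_\sigma(X,K_X+\Delta)\geq \kappa_\iota(X,K_X+\Delta)$ is automatic from the definition of Nakayama's numerical dimension, so only the reverse direction requires work. I would first split on whether $K_X+\Delta$ is pseudo-effective. If it is not, then \cite[V.1.4.~Corollary]{nakayama} (cited as Definition \ref{f-def2.7} in the excerpt) gives $\kappa_\sigma(X,K_X+\Delta)=-\infty$, and running a $(K_X+\Delta)$-MMP for the $\mathbb{Q}$-factorial dlt pair produces a Mori fiber space, which forces $\kappa_\iota(X,K_X+\Delta)=-\infty$ as well; both sides agree trivially.

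In the pseudo-effective case, the strategy is to run a $(K_X+\Delta)$-MMP until we reach a minimal model $(X',\Delta')$ with $K_{X'}+\Delta'$ nef. Both invariants should be preserved by divisorial contractions and flips: $\kappa_\iota$ because the log canonical divisor is pushed forward and pulled back compatibly and the invariant Iitaka dimension is stable under birational maps that preserve the pair structure, and $\kappa_\sigma$ because such steps are isomorphisms in codimension one and Nakayama's dimension is purely numerical. Once on the nef model, the (usual) abundance conjecture for dlt pairs predicts that $K_{X'}+\Delta'$ is semi-ample, and for a semi-ample $\mathbb{Q}$-divisor the Iitaka fibration immediately yields $\kappa=\kappa_\sigma$, since fibers and base contribute transparently to both invariants. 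Tracing the equalities back through the MMP closes the argument.

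The main obstacle is that both inputs to this reduction are themselves famous open problems above dimension three. Existence of a good minimal model for pseudo-effective but non-big $(K_X+\Delta)$ is known only in special situations: BCHM handles the big case, and, as used in Section \ref{f-sec4} of the present paper, the klt-with-big-boundary MMP covers enough for the affine application. The abundance conjecture for nef log canonical divisors is itself open beyond dimension three. The genuinely hard core is the regime where $K_X+\Delta$ is nef with $\kappa_\sigma>0$ but $\kappa_\iota=0$, that is, numerically nontrivial yet with no sections to witness it; here one would hope to invoke a canonical bundle formula of Ambro--Fujino--Mori type to descend the problem through an Iitaka-type fibration and induct on dimension, but extracting such a fibration from purely numerical information is essentially the content of abundance itself. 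Consequently any honest proof proposal inherits the full depth of the generalized abundance conjecture, which is why the present paper treats Conjecture \ref{f-conj2.10} as a conditional hypothesis rather than a theorem.
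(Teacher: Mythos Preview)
The statement you were asked to prove is labeled a \emph{Conjecture} in the paper, and the paper offers no proof of it; it is explicitly treated as open. Your proposal is therefore not being compared against any argument in the paper, because there is none. You yourself recognize this in your final paragraph, correctly observing that the paper invokes Conjecture~\ref{f-conj2.10} only as a conditional hypothesis.

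That said, your analysis of what a proof would require is accurate and lines up with the paper's own perspective. The reduction you sketch---run an MMP, note that $\kappa$ and $\kappa_\sigma$ are preserved under flips and divisorial contractions, then apply abundance on the resulting minimal model or read off $-\infty$ from a Mori fiber space---is exactly the content of Proposition~\ref{f-prop4.3} in the special case of a smooth $X$ with simple normal crossing boundary, and is reiterated in Remark~\ref{f-rem4.5}. So your conditional argument is not merely plausible; it is the same mechanism the paper uses whenever it can actually verify the conjecture (e.g.\ in dimension $\leq 3$, or for the affine compactifications of Proposition~\ref{f-prop4.1} via BCHM). The genuine obstruction you identify---that extracting an Iitaka-type fibration from purely numerical data in the pseudo-effective, non-big, non-semiample regime is essentially abundance itself---is precisely why the statement remains conjectural.

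In short: there is no gap in your reasoning beyond the one you already name, and there is no proof in the paper to compare against.
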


It is obvious that if Conjecture \ref{f-conj2.10} holds for $(X, D_X)$ in 
Theorem \ref{f-thm1.3} then Theorem \ref{f-thm1.3} implies 
Conjecture \ref{f-conj1.2} 
in full generality. 

\begin{rem}[On the definition of $\kappa_\iota (X, K_X+\Delta)$]\label{f-rem2.11}
We have to be careful when $\Delta$ is an $\mathbb R$-divisor 
in Conjecture \ref{f-conj2.10}. 
If there exists an effective $\mathbb R$-divisor 
$D$ on $X$ such that $K_X+\Delta\sim _{\mathbb R}D$, then 
we put 
$$
\kappa_\iota (X, K_X+\Delta)=\underset{m\to \infty}{\limsup}\frac
{\log \dim H^0(X, \mathcal O_X(\lfloor mD\rfloor))}{\log m}. 
$$
Otherwise, we put $\kappa_\iota (X, K_X+\Delta)=-\infty$. 
The above definition of $\kappa_\iota (X, K_X+\Delta)$ is well-defined, 
that is, $\kappa_\iota (X, K_X+\Delta)$ is independent of the choice of $D$ 
(see, for example, \cite[Definition 2.2.1]{choi} and \cite{fujino-foundation}). 
Note that if $K_X+\Delta$ is a $\mathbb Q$-divisor 
then $\kappa _\iota(X, K_X+\Delta)$ coincides 
with $\kappa (X, K_X+\Delta)$, that is, 
$$
\kappa_\iota (X, K_X+\Delta)=
\underset{m\to \infty}{\limsup}\frac
{\log \dim H^0(X, \mathcal O_X(\lfloor m(K_X+\Delta)\rfloor))}{\log m}.  
$$ 
\begin{ex}
We put $X=\mathbb P^1$. 
Let $\Delta$ be an effective $\mathbb R$-divisor on $X$. 
We assume that 
$\deg \Delta=2$ and that 
$\Delta$ is not a $\mathbb Q$-divisor. 
Then we can easily see that 
$K_X+\Delta\sim _{\mathbb R} 0$, 
$\kappa _\sigma (X, K_X+\Delta)=\kappa _{\iota}
(X, K_X+\Delta)=0$, and 
$\kappa (X, K_X+\Delta)=-\infty$. 
\end{ex}

Anyway, we do not use $\mathbb R$-divisors in this paper. 
So, we do not discuss subtle problems on $\mathbb R$-divisors here. 
However, we note that it is indispensable to 
treat $\mathbb R$-divisors when we discuss Conjecture \ref{f-conj2.10} and 
Conjecture \ref{f-conj2.13} below. 
\end{rem}

Note that Conjecture \ref{f-conj2.10} holds in dimension $\leq n$ if and only if 
Conjecture \ref{f-conj2.13} holds in dimension $\leq n$. 

\begin{conj}[Good minimal model conjecture]\label{f-conj2.13} 
Let $(X, \Delta)$ be a $\mathbb Q$-factorial projective 
dlt pair. Assume that $K_X+\Delta$ is pseudo-effective. 
Then $(X, \Delta)$ has a good minimal model. 
\end{conj}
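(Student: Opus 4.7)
The plan is to split the statement into two standard halves and attack each in turn: (i) produce a birational contraction $(X,\Delta)\dashrightarrow (X',\Delta')$ to a $\mathbb Q$-factorial dlt pair with $K_{X'}+\Delta'$ nef (existence of a minimal model), and (ii) establish log abundance, namely that $K_{X'}+\Delta'$ is semiample. For (i) I would run a $(K_X+\Delta)$-MMP with scaling of some ample divisor $H$. Existence of the required flips for dlt pairs reduces to the klt case by standard perturbation arguments and is supplied by BCHM, and divisorial contractions are provided by the base-point-free theorem. The real obstacle is termination, which is open for pseudo-effective but non-big dlt pairs in dimension at least four. My strategy would combine (a) special termination on the non-klt locus, proved by induction on dimension from the log MMP in lower dimension, with (b) Shokurov's framework using the ACC for log canonical thresholds (now available by Hacon--McKernan--Xu) to propagate termination from the boundary inward. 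Alternatively, given a log non-vanishing statement, $K_X+\Delta$ becomes $\mathbb Q$-effective, after which one may appeal to Birkar's MMP with scaling for effective log pairs.

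For (ii), granted a minimal model $(X',\Delta')$ with $K_{X'}+\Delta'$ nef, I would argue as follows. First prove log non-vanishing so that $\kappa_\iota(X',K_{X'}+\Delta')\geq 0$ and some sufficiently divisible multiple has a section. Second, form the Iitaka fibration $f:X'\dashrightarrow Z$ of $K_{X'}+\Delta'$ after a log resolution. Third, apply the canonical bundle formula of Fujino--Mori to write
\[
K_{X'}+\Delta'\sim_{\mathbb Q} f^{*}(K_Z+B_Z+M_Z),
\]
with discriminant part $B_Z$ and a nef moduli part $M_Z$, so that the problem reduces to semi-ampleness of the generalized lc pair $(Z,B_Z+M_Z)$ on a base of lower dimension. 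Close by induction on $\dim Z$, combining the inductive hypothesis of good minimal models in dimension $<\dim X$ with the base-point-free theorem applied after a suitable resolution.

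The hard part is log abundance in dimension $\geq 4$: both log non-vanishing for nef log canonical divisors and the eventual semi-ampleness of the moduli part in the canonical bundle formula are open. Any genuinely new input would presumably have to come either from Hodge-theoretic positivity (generalizing Kawamata's dimension-three argument) or from a new deployment of Nakayama's $\omega$-sheaves and $\widehat\omega$-sheaves of the kind exploited in the present paper. As an indicative first case I would attack the log Calabi--Yau situation $K_X+\Delta\equiv 0$, where the task reduces to showing $K_X+\Delta\sim_{\mathbb Q}0$; even this remains open in general, and constitutes the bottleneck one would have to break before any serious attempt at the full conjecture could succeed.
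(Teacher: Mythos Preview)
The statement you are attempting to prove is a \emph{conjecture}, not a theorem, and the paper does not claim to prove it. Conjecture~\ref{f-conj2.13} (the good minimal model conjecture) is stated only as one of the equivalent formulations of the generalized abundance conjecture (Conjecture~\ref{f-conj2.10}); the surrounding text merely remarks that these two conjectures are equivalent in any fixed dimension and refers to \cite{fujino-gongyo} for relationships among such conjectures. There is no proof in the paper to compare your proposal against.

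Your proposal is in fact not a proof either, and you say as much yourself: termination of the MMP for pseudo-effective non-big dlt pairs in dimension $\geq 4$ is open, log non-vanishing for nef log canonical divisors is open, and semi-ampleness of the moduli part in the canonical bundle formula is open. What you have written is a reasonable survey of the standard reduction steps and the known obstacles, but it does not close any of the genuine gaps. In particular, your suggested route through special termination plus ACC for lc thresholds does not by itself yield termination in the pseudo-effective case, and the inductive scheme for abundance via the Iitaka fibration presupposes exactly the non-vanishing input you identify as missing. So the bottom line is: there is no proof to evaluate here, because neither the paper nor your proposal contains one.
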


For the relationships among various conjectures on the minimal model 
program, see \cite{fujino-gongyo}. 

We will use the following easy well-known 
lemma in the proof of Corollary \ref{f-cor1.6}. 

\begin{lem}\label{f-lem2.14}
Let $f:X\to Y$ be a generically finite surjective morphism 
between smooth projective varieties. 
Let $D_X$ $($resp.~$D_Y$$)$ be a simple normal crossing divisor on $X$ 
$($resp.~$Y$$)$. 
Assume that $\Supp f^*D_Y\subset \Supp D_X$. 
Then we have 
$$
\kappa(X, K_X+D_X)\geq \kappa (Y, K_Y+D_Y)
$$
and 
$$
\kappa _{\sigma}(X, K_X+D_X)\geq \kappa_\sigma(Y, K_Y+D_Y). 
$$
\end{lem}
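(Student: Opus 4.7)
The plan is to reduce both inequalities to the effectivity of a single auxiliary divisor on $X$. Since $f$ is a generically finite surjective morphism between smooth varieties, there is a ramification formula $K_X = f^*K_Y + R$ with $R \geq 0$. Setting
$$E := R + D_X - f^*D_Y,$$
one has $K_X + D_X = f^*(K_Y + D_Y) + E$, so everything will follow once we show $E$ is effective.

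The key step is to verify $v_P(E) \geq 0$ for every prime divisor $P$ on $X$. If $P \not\subset \Supp f^*D_Y$, then $v_P(f^*D_Y) = 0$ and effectivity of $R$ and $D_X$ gives the inequality immediately. If instead $P \subset \Supp f^*D_Y$, then $f(P)$ must be a prime divisor $B$ on $Y$ lying in $\Supp D_Y$ (otherwise the pullback contributes nothing along $P$), and the SNC hypothesis on $D_Y$ ensures that $B$ appears in $D_Y$ with multiplicity exactly one. Writing $e_P$ for the ramification index of $f$ along $P$, one gets $v_P(f^*D_Y) = e_P$ and $v_P(R) = e_P - 1$. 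The hypothesis $\Supp f^*D_Y \subset \Supp D_X$ forces $v_P(D_X) \geq 1$, and so
$$v_P(E) \;\geq\; (e_P - 1) + 1 - e_P \;=\; 0.$$
This is the one place where the SNC assumption on $D_Y$ is essential, and I expect it to be the only real obstacle; without the multiplicity-one assertion the cancellation would fail.

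Granting effectivity of $E$, fix a section $s_E \in H^0(X, \mathcal O_X(E))$ with divisor $E$. Pulling back sections along the dominant map $f$ and multiplying by $s_E^{\otimes m}$ yields an injection
$$H^0\bigl(Y, \mathcal O_Y(m(K_Y+D_Y))\bigr) \hookrightarrow H^0\bigl(X, \mathcal O_X(m(K_X+D_X))\bigr)$$
for every $m \geq 0$, which gives $\kappa(X, K_X+D_X) \geq \kappa(Y, K_Y+D_Y)$. For the numerical version, fix an ample divisor $A_Y$ on $Y$ computing $\kappa_\sigma(Y, K_Y+D_Y)$ and choose any Cartier divisor $A_X$ on $X$ with $A_X - f^*A_Y$ effective (any sufficiently ample $A_X$ works). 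Combining pullback with multiplication by $s_E^{\otimes m}$ and by a section defining $A_X - f^*A_Y$ produces an injection
$$H^0\bigl(Y, \mathcal O_Y(A_Y + m(K_Y+D_Y))\bigr) \hookrightarrow H^0\bigl(X, \mathcal O_X(A_X + m(K_X+D_X))\bigr),$$
so $\sigma(K_X+D_X;\,A_X) \geq \sigma(K_Y+D_Y;\,A_Y) = \kappa_\sigma(Y, K_Y+D_Y)$, finishing the proof.
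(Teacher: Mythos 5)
Your reduction of both inequalities to the effectivity of $E = R + D_X - f^*D_Y$ is the right idea, and the second half of your argument (pulling back sections and twisting by a sufficiently ample $A_X \geq f^*A_Y$ for the $\kappa_\sigma$ statement) is fine. But the effectivity check has a genuine gap: you assert that any prime divisor $P \subset \Supp f^*D_Y$ must dominate a prime divisor $B$ of $Y$, ``otherwise the pullback contributes nothing along $P$.'' That is false. A generically finite morphism between smooth projective varieties need not be finite, and an $f$-exceptional divisor $P$ with $f(P)$ of codimension $\geq 2$ contained in $\Supp D_Y$ still satisfies $v_P(f^*D_Y) > 0$; indeed $v_P(f^*D_Y)$ can be arbitrarily large. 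The simplest instance: $Y$ a surface, $D_Y$ two lines of the simple normal crossing divisor meeting at a point $p$, and $X \to Y$ the blow-up at $p$; the exceptional curve $P$ has $v_P(f^*D_Y)=2$, $v_P(R)=1$, $v_P(D_X)=1$, so $v_P(E)=0$ only just, and there is no single ramification index $e_P$ making your displayed cancellation apply. In this exceptional case the inequality $v_P(R) - v_P(f^*D_Y) \geq -1$ is exactly the statement that the snc pair $(Y, D_Y)$ is log canonical, which is true but is precisely the content your case analysis omits; without it the argument does not close.

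The paper sidesteps all of this with a single sheaf-theoretic observation: since $\Supp f^*D_Y \subset \Supp D_X$, pullback of logarithmic forms gives an inclusion $f^*\Omega^n_Y(\log D_Y) \subset \Omega^n_X(\log D_X)$ with $n = \dim X = \dim Y$, which immediately yields $K_X + D_X = f^*(K_Y+D_Y) + R$ with $R$ effective, uniformly for dominated and contracted divisors alike. If you want to keep your valuation-by-valuation approach, you must add the exceptional case and justify it (e.g.\ by the log canonicity of snc pairs, or by reducing to the log-form inclusion anyway); as written the proof is incomplete.
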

\begin{proof}
We put $n=\dim X=\dim Y$. 
Then we have 
$$
f^*\Omega^n_Y(\log D_Y)\subset \Omega^n_X(\log D_X). 
$$ 
Therefore, we can write 
$$
K_X+D_X=f^*(K_Y+D_Y)+R
$$ 
for some effective Cartier divisor $R$. 
Thus, we have the desired inequalities. 
\end{proof}
\end{say}

\begin{say}[Nakayama's $\omega$-sheaves and $\widehat\omega$-sheaves] 

Let us briefly recall the theory of 
Nakayama's $\omega$-sheaves and $\widehat{\omega}$-sheaves. 

The following definition of $\omega$-sheaf is equivalent to 
Nakayama's original definition of $\omega$-sheaf in the category of projective 
varieties (see \cite[V.3.8.~Definition]{nakayama}). 

\begin{defn}[$\omega$-sheaf]
A coherent sheaf $\mathcal F$ on a projective variety $Y$
is called an {\em{$\omega$-sheaf}} if there exists a 
projective morphism $f:X\to Y$ from a smooth 
projective variety $X$ such that $\mathcal F$ is a direct summand of 
$f_*\omega_X$. 
\end{defn}

We also need the notion of $\widehat\omega$-sheaf (see 
\cite[V.3.16.~Definition]{nakayama}). 

\begin{defn}[$\widehat{\omega}$-sheaf]
A coherent torsion-free sheaf $\mathcal F$ on a normal projective variety $Y$ 
is called an {\em{$\widehat{\omega}$-sheaf}} 
if there exist an $\omega$-sheaf $\mathcal G$ and a generically 
isomorphic inclusion $\mathcal G\hookrightarrow \mathcal F^{**}$ into the double 
dual $\mathcal F^{**}$ of $\mathcal F$. 
\end{defn}

Although the following lemma is easy to prove, 
it plays a crucial role in the proof of Theorem \ref{f-thm1.3}. 

\begin{lem}\label{f-lem2.18}
Let $Y$ be a projective variety. 
Then there exists an ample Cartier divisor $A$ on $Y$ such that 
$\mathcal F\otimes \mathcal O_Y(A)$ is generated 
by global sections for every $\omega$-sheaf $\mathcal F$ on $Y$. 
\end{lem}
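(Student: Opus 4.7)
The plan is to produce a single ample divisor $A$ by combining Koll\'ar's vanishing theorem with Castelnuovo--Mumford regularity. The key point is that Koll\'ar's theorem gives higher cohomology vanishing for $f_{*}\omega_{X}$ twisted by any ample line bundle on $Y$, with no dependence on the choice of the projective morphism $f\colon X \to Y$. This uniformity in $f$ is exactly what makes a single $A$ work simultaneously for every $\omega$-sheaf on $Y$.

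Concretely, I would fix a very ample Cartier divisor $A_{0}$ on $Y$, set $n = \dim Y$, and take $A = (n+1)A_{0}$. Given an $\omega$-sheaf $\mathcal{F}$ on $Y$, by definition it is a direct summand of $f_{*}\omega_{X}$ for some projective morphism $f\colon X \to Y$ from a smooth projective variety $X$ (if $f$ is not surjective, replace $Y$ by the closed image, noting that $A_{0}$ restricts to an ample divisor there). Koll\'ar's vanishing then yields
$$
H^{i}(Y, f_{*}\omega_{X} \otimes \mathcal{O}_{Y}(mA_{0})) = 0 \quad \text{for all } i > 0 \text{ and } m \geq 1,
$$
and since cohomology is additive on direct sums this vanishing passes immediately to the summand $\mathcal{F}$.

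To conclude, I would verify that $\mathcal{F} \otimes \mathcal{O}_{Y}(A)$ is $0$-regular with respect to $A_{0}$ in the sense of Castelnuovo--Mumford: for $i = 1, \dots, n$ the required vanishing $H^{i}(Y, \mathcal{F} \otimes \mathcal{O}_{Y}((n+1-i)A_{0})) = 0$ is the case $m = n+1-i \geq 1$ established above, while vanishing for $i > n$ is automatic by Grothendieck's dimension bound. Castelnuovo--Mumford regularity then gives that $\mathcal{F} \otimes \mathcal{O}_{Y}(A)$ is generated by its global sections.

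There is no real obstacle here: the argument is a clean combination of two standard tools. The only delicate point is that $A$ must be chosen before $\mathcal{F}$, and this uniformity is built into Koll\'ar's theorem, since the same ample divisor $mA_{0}$ kills the higher cohomology of $f_{*}\omega_{X}$ regardless of which $f$ produced the given $\omega$-sheaf.
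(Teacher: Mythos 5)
Your proof is correct and follows essentially the same route as the paper: reduce to a direct summand of $f_*\omega_X$, apply Koll\'ar's vanishing theorem uniformly in $f$ with $A=(\dim Y+1)A_0$, and conclude by Castelnuovo--Mumford regularity. The only cosmetic difference is that the paper reduces to $\mathcal F=f_*\omega_X$ outright while you carry the direct summand through the vanishing statement, which amounts to the same thing.
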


\begin{proof}
We may assume that 
$\mathcal F=f_*\omega_X$ for a projective morphism 
$f:X\to Y$ from a smooth projective variety $X$. 
Let $H$ be an ample Cartier divisor on $Y$ such that $|H|$ is free. 
We put $A=(\dim Y+1)H$. 
Then we have 
$$
H^i(Y, \mathcal F\otimes \mathcal O_Y(A)\otimes \mathcal 
O_Y(-iH))=0 
$$ 
for every $i>0$ by Koll\'ar's vanishing theorem. 
Therefore, by using the Castelnuovo--Mumford regularity, 
we see that 
$\mathcal F\otimes \mathcal O_Y(A)$ is generated by global sections. 
\end{proof}

As an obvious corollary of Lemma \ref{f-lem2.18}, we have: 

\begin{cor}\label{f-cor2.19}
Let $Y$ be a normal projective variety. 
Then there exists an ample Cartier divisor $A$ on $Y$ such that 
$\mathcal F\otimes \mathcal O_Y(A)$ is generically generated by 
global sections for every reflexive $\widehat{\omega}$-sheaf $\mathcal F$ on $Y$. 
\end{cor}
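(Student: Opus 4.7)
The plan is to apply Lemma \ref{f-lem2.18} essentially verbatim, using the definition of $\widehat{\omega}$-sheaf to reduce the reflexive case to the $\omega$-sheaf case. First, since $Y$ is in particular a projective variety, I would invoke Lemma \ref{f-lem2.18} on $Y$ to produce an ample Cartier divisor $A$ on $Y$ such that $\mathcal G\otimes \mathcal O_Y(A)$ is globally generated for every $\omega$-sheaf $\mathcal G$ on $Y$. The key point is that this choice of $A$ is uniform, i.e., independent of $\mathcal G$, so the same divisor will serve for every reflexive $\widehat\omega$-sheaf.

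Now let $\mathcal F$ be any reflexive $\widehat{\omega}$-sheaf on $Y$. By definition, there exist an $\omega$-sheaf $\mathcal G$ on $Y$ and a generically isomorphic inclusion $\mathcal G\hookrightarrow \mathcal F^{**}$, and by the reflexivity assumption $\mathcal F^{**}=\mathcal F$, so we obtain a generically isomorphic inclusion $\mathcal G\hookrightarrow \mathcal F$. Tensoring with the invertible sheaf $\mathcal O_Y(A)$ preserves generic isomorphism, so the induced morphism
\[
\mathcal G\otimes \mathcal O_Y(A) \hookrightarrow \mathcal F\otimes \mathcal O_Y(A)
\]
is again a generically isomorphic inclusion, while the left-hand side is globally generated by the choice of $A$. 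Composing the evaluation map of $\mathcal G\otimes \mathcal O_Y(A)$ with this inclusion, and pushing sections through the induced map $H^0(Y,\mathcal G\otimes \mathcal O_Y(A))\to H^0(Y,\mathcal F\otimes \mathcal O_Y(A))$, we obtain global sections of $\mathcal F\otimes \mathcal O_Y(A)$ that surject onto the stalks over the dense Zariski open set where the inclusion is an isomorphism. This is exactly generic generation by global sections.

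There is essentially no obstacle here; the statement is a formal consequence of the definition of a reflexive $\widehat{\omega}$-sheaf together with Lemma \ref{f-lem2.18}. The only mildly delicate point worth flagging is the use of the reflexivity hypothesis to identify $\mathcal F^{**}$ with $\mathcal F$: without it, the argument would only produce generic generation of $\mathcal F^{**}\otimes \mathcal O_Y(A)$, not of $\mathcal F\otimes \mathcal O_Y(A)$ itself, since the canonical map $\mathcal F\to \mathcal F^{**}$ need not be surjective on any open set in the non-reflexive case.
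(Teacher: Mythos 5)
Your proof is correct and is exactly the argument the paper intends: the paper presents Corollary \ref{f-cor2.19} as an immediate consequence of Lemma \ref{f-lem2.18}, relying on the uniform choice of $A$ together with the generically isomorphic inclusion $\mathcal G\hookrightarrow \mathcal F^{**}=\mathcal F$ from the definition of a reflexive $\widehat\omega$-sheaf. Your remark on where reflexivity is actually used is a sensible clarification, but there is no substantive difference from the paper's (unwritten) proof.
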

\end{say}

\begin{say}[Strictly rational map]
We close this section with the notion of strictly rational maps. 
For the details, see \cite[Lecture 2]{iitaka2} and 
\cite[\S2.12 Strictly Rational Maps]{iitaka3}. 

\begin{defn}[Strictly rational map]
Let $f:X\dashrightarrow Y$ be 
a rational map between irreducible 
varieties. 
If there is a proper birational morphism $\mu:Z\to X$ from an irreducible 
variety $Z$ such that 
$f\circ \mu$ is a morphism, then $f:X\dashrightarrow Y$ is called a 
{\em{strictly rational map}}. 
$$
\xymatrix{
Z\ar[d]_{\mu}\ar[dr]^{f\circ \mu}&\\
X\ar@{-->}[r]_{f}&Y
}
$$
\end{defn} 
Note that a rational map $f:X\dashrightarrow Y$ from $X$ to 
a complete variety $Y$ is always strictly rational. 

\begin{ex}
Let $X$ be a smooth 
projective variety and let $U$ be a dense open subset of $X$ 
such that $U\subsetneq X$. 
Then the natural open immersion $\iota: U\hookrightarrow X$ 
is strictly rational. On the other hand, 
$f=\iota^{-1}: X\dashrightarrow U$ is not strictly rational. 
\end{ex}
\end{say}

\section{Subadditivity of Nakayama's numerical Kodaira dimension}\label{f-sec3}

In this section, we prove Theorem \ref{f-thm1.3} by using Nakayama's theory 
of $\omega$-sheaves and $\widehat\omega$-sheaves. 
The following lemma is a special case of \cite[V.3.34.~Lemma]{nakayama}. 
It is a reformulation and a generalization of Viehweg's deep result 
(see \cite[Corollary 5.2]{viehweg}). 

\begin{lem}[{cf.~\cite[V.3.34.~Lemma]{nakayama}}]\label{f-lem3.1}
Let $f:X\to Y$ be a projective surjective morphism 
from a normal projective variety $X$ onto a smooth 
projective variety $Y$ with connected fibers. 
Let $L$ be a Cartier divisor on $X$, let $\Delta$ be an effective 
$\mathbb Q$-divisor on $X$, 
and let $k$ be an integer greater than one satisfying 
the following 
conditions: 
\begin{itemize}
\item[(i)] $(X, \Delta)$ is klt. 
\item[(ii)] $L-k(K_{X/Y}+\Delta)$ is ample. 
\end{itemize}
Then we obtain that 
$$
\omega_Y((k-1)H)\otimes f_*\mathcal O_X(L)
$$ 
is an $\widehat \omega$-sheaf for any 
ample Cartier divisor $H$ on $Y$. 
\end{lem}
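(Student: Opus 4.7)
The plan is to adapt Viehweg's cyclic covering trick in the form developed by Nakayama in \cite[V.3.34.~Lemma]{nakayama}: build a generically finite cover $\pi\colon X'\to X$ whose relative dualizing sheaf absorbs the Cartier divisor $L$, so that after pushing forward through $f\circ\pi$ and invoking Koll\'ar's theorem on direct images of dualizing sheaves, $\omega_Y((k-1)H)\otimes f_*\mathcal O_X(L)$ is recognized as containing generically a summand of an honest $\omega$-sheaf on $Y$.

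To carry this out, I would first exploit the ampleness of $L-k(K_{X/Y}+\Delta)$ together with the twist by $(k-1)f^*H$ to produce, on a suitable birational model of $X$, a line bundle of the form $kM$ for an integral Cartier divisor $M$. The role of the $(k-1)H$ twist is exactly to enforce $k$-divisibility in the Picard group for an arbitrary ample Cartier divisor $H$, which is needed in order to extract the $k$-th root. After clearing the denominators of $\Delta$ by a further generically finite base change if necessary, I would choose a sufficiently general smooth member $B\in|kM|$ by Bertini and form the degree-$k$ cyclic cover $\pi\colon X'\to X$ branched along $B$, followed by a resolution of singularities. The klt hypothesis on $(X,\Delta)$ together with the generality of $B$ ensures that the induced pair on $X'$ remains klt, so that Koll\'ar-type vanishing, torsion-freeness, and decomposition theorems apply to $f\circ\pi\colon X'\to Y$.

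The decomposition $\pi_*\omega_{X'/X}=\bigoplus_{i=0}^{k-1}\mathcal O_X(iM)$, with appropriate fractional adjustments coming from $\Delta$, has the key feature that the top summand $\mathcal O_X((k-1)M)\otimes\omega_{X/Y}$, combined with the definition of $M$, is isomorphic on a big open subset of $X$ to $\mathcal O_X(L)\otimes f^*\mathcal O_Y((k-1)H)$. Pushing forward by $f$ and tensoring by $\omega_Y$, this summand contributes $\omega_Y((k-1)H)\otimes f_*\mathcal O_X(L)$, which therefore admits a generic inclusion from an $\omega$-sheaf realised as a direct summand of $(f\circ\pi)_*\omega_{X'}$; this is precisely the $\widehat\omega$-sheaf property in the sense of the definition recalled just before the lemma. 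The main obstacle is the bookkeeping: matching the twist by $\omega_Y((k-1)H)$ to the correct character summand of $\pi_*\omega_{X'/X}$, propagating klt through the branched cover and its resolution, and separating the honest $\omega$-sheaf contribution from the remaining characters of the cyclic action in such a way that the generic isomorphism into the double dual truly holds.
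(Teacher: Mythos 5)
The paper gives no proof of this lemma: it defers entirely to \cite[V.3.34.~Lemma]{nakayama}, remarking only that the essence of Viehweg's weak positivity theory is contained in that proof and that the lemma is therefore highly nontrivial. Your sketch has the right skeleton --- a cyclic cover, Koll\'ar's theorem on direct images of dualizing sheaves, and the eigensheaf decomposition of $\pi_*\omega_{X'/X}$ --- but it misidentifies the role of the twist $(k-1)H$ and has a genuine gap at exactly the point where the real content lies. The covering trick does require the divisor one covers along to lie in $|kM|$ for some Cartier divisor $M$, but that divisibility is arranged by construction from the given relation $L-k(K_{X/Y}+\Delta)=A$; adding $(k-1)H$ to a divisor class does not make it divisible by $k$ for an arbitrary ample $H$ (modulo $k$ it subtracts $H$). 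The twist $(k-1)H$ is present for a different and much more substantive reason: the sheaves whose sections one needs in order to choose the branch divisor are only weakly positive, hence only generically globally generated after twisting by an ample divisor pulled back from $Y$ (compare Lemma \ref{f-lem2.18} and Corollary \ref{f-cor2.19}), and the induction on $k$ (equivalently, Viehweg's fibre-product trick) consumes $k-1$ copies of $H$ in the process.

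The fatal step is the choice of ``a sufficiently general smooth member $B\in|kM|$ by Bertini.'' Unwinding your normalization, $kM$ differs from a positive multiple of $K_{X/Y}+\Delta$ only by the ample classes $A$ and $f^*H$; since $K_{X/Y}+\Delta$ is not assumed nef, semiample, or even pseudo-effective, the linear system $|kM|$ has no reason to be base-point free or even nonempty, and Bertini does not apply. Producing a member of (a suitable multiple of) this system that is general over the generic point of $Y$ and mild enough that the induced pair on the cover remains klt is precisely the content of Viehweg's weak positivity theorem; it requires the fibre product $X\times_Y\cdots\times_Y X$ or an induction on $k$, together with the generic global generation of $\omega$-sheaves after a fixed ample twist. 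Without that input the cover $X'$ cannot be constructed with control on its singularities, the Koll\'ar-type vanishing and torsion-freeness theorems you invoke are unavailable, and the generic inclusion of an $\omega$-sheaf into $\omega_Y((k-1)H)\otimes f_*\mathcal O_X(L)$ does not follow. What you call ``bookkeeping'' at the end is manageable; the missing weak positivity step is not.
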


\begin{rem}\label{f-rem3.2}In Lemma \ref{f-lem3.1}, 
it is sufficient to assume that $(X, \Delta)$ is lc and that 
there is a positive rational number $\delta$ such that 
$(X, (1-\delta)\Delta)$ is klt. 
This is because $L-k(K_{X/Y}+(1-\varepsilon)\Delta)$ 
is ample and $(X, (1-\varepsilon)\Delta)$ is klt for 
$0<\varepsilon \ll \delta$. 
Therefore, we can replace $(X, \Delta)$ with 
$(X, (1-\varepsilon)\Delta)$ and may assume that 
$(X, \Delta)$ is klt. 
\end{rem}

We do not repeat the proof of \cite[V.3.34.~Lemma]{nakayama} here. 
For the details, see \cite{nakayama}. 
Note that the essence of Viehweg's theory of weakly positive sheaves 
is contained in the proof of Lemma \ref{f-lem3.1}. 
Therefore, Lemma \ref{f-lem3.1} is highly nontrivial. 

We make a small remark on the proof of \cite[V.3.34.~Lemma]{nakayama} 
for the reader's convenience. 

\begin{rem}\label{f-rem3.3}
In the proof of \cite[V.3.34.~Lemma]{nakayama}, 
$P$ is nef and big in our setting. 
By taking more blow-ups and perturbing the coefficients of 
$\Delta$ slightly, we may further assume that $P$ is 
ample. Therefore, it is easy to see that 
$f_*\mathcal O_X(K_X+\lceil P\rceil)$ is 
an $\omega$-big $\omega$-sheaf. 
For the definition of {\em{$\omega$-big $\omega$-sheaves}}, 
see \cite[V.3.16.~Definition (1)]{nakayama}.  
\end{rem}

By the proof of \cite[V.3.35.~Theorem]{nakayama}, 
we can check the following theorem. It is an application of 
Lemma \ref{f-lem3.1}. 

\begin{thm}[{cf.~\cite[V.3.35.~Theorem]{nakayama}}]\label{f-thm3.4}
Let $f:X\to Y$ be a surjective morphism 
from a normal projective variety $X$ onto a smooth projective 
variety $Y$ with the following properties: 
\begin{itemize}
\item[(i)] $f$ has connected fibers. 
\item[(ii)] $f:(U_X\subset X)\to (U_Y\subset Y)$ is toroidal 
and is equidimensional. 
\item[(iii)] $f$ is smooth over $U_Y$. 
\item[(iv)] $X$ has only quotient singularities. 
\item[(v)] $\Delta_Y=Y\setminus U_Y$. 
\item[(vi)] $\Delta_X$ is a reduced divisor contained in $X\setminus U_X$. 
\item[(vii)] $\Supp f^*\Delta_Y\subset \Supp \Delta_X$. 
\end{itemize}
Let $L$ be a Cartier divisor on $X$ and let $k$ be a positive 
integer with $k\geq 2$ such that $k(K_X+\Delta_X)$ is Cartier.  
Assume that $$L-k(K_{X/Y}+\Delta_X-f^*\Delta_Y)$$ is very ample. 
Then 
$$
\omega_Y(\Delta_Y)\otimes f_*\mathcal O_X(L)
$$ 
is an 
$\widehat \omega$-sheaf. 
\end{thm}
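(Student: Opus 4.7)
The plan is to reduce Theorem~\ref{f-thm3.4} to Lemma~\ref{f-lem3.1} by means of a Kawamata-type cyclic cover of $Y$ that flattens the ramification of $f$ along $\Delta_Y$. One would first try to apply Lemma~\ref{f-lem3.1} directly with the candidate boundary $\Delta := \Delta_X - f^*\Delta_Y$, since then the ampleness hypothesis of Lemma~\ref{f-lem3.1} becomes exactly the very-ampleness assumption of Theorem~\ref{f-thm3.4}. The obstruction is that this $\Delta$ need not be effective: in the equidimensional toroidal setting the multiplicity of $f^*\Delta_Y$ along a component $E\subset\Delta_X$ equals the ramification index of $f$ there, which may be $\geq 2$.

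To remove this obstruction, let $N$ be a common multiple of all such ramification indices, and construct a Kawamata cover $\pi:Y'\to Y$ with $Y'$ smooth, branched along $\Delta_Y$ and sufficiently general auxiliary very ample divisors, so that $\pi^*\Delta_Y = N\Delta_{Y'}$ for some reduced SNC divisor $\Delta_{Y'}$, and hence $K_{Y'}+\Delta_{Y'}=\pi^*(K_Y+\Delta_Y)$. Let $f':X'\to Y'$ be a toroidal equidimensional resolution of the normalized base change, with induced morphism $\psi:X'\to X$. A direct local computation on the normalized fibre product shows that with this choice of $N$ the multiplicities of $(f')^*\Delta_{Y'}$ along the components of $\Delta_{X'}:=(X'\setminus U_{X'})_{\mathrm{red}}$ are at most $1$, so $\Delta_{X'}-(f')^*\Delta_{Y'}$ is effective, and $X'$ still has only quotient singularities. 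Taking $L':=\psi^*L$ plus any necessary effective exceptional correction needed to preserve Cartierness and very-ampleness, Lemma~\ref{f-lem3.1} combined with Remark~\ref{f-rem3.2} then yields that $\omega_{Y'}((k-1)H')\otimes f'_*\mathcal{O}_{X'}(L')$ is an $\widehat{\omega}$-sheaf for any ample Cartier divisor $H'$ on $Y'$.

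To descend back to $Y$, push forward by $\pi$. The key inputs are that $\pi_*$ sends $\omega$-sheaves to $\omega$-sheaves (immediate from the definition), that $K_{Y'}+\Delta_{Y'}=\pi^*(K_Y+\Delta_Y)$ gives a generically isomorphic reflexive inclusion $\omega_Y(\Delta_Y)\hookrightarrow \pi_*\omega_{Y'}$, and that flat base change produces a generically isomorphic inclusion $\pi^*f_*\mathcal{O}_X(L)\hookrightarrow f'_*\mathcal{O}_{X'}(L')$. Combining these via the projection formula and taking reflexive hulls, one obtains a generically isomorphic inclusion of an $\omega$-sheaf into $(\omega_Y(\Delta_Y)\otimes f_*\mathcal{O}_X(L))^{**}$, which exhibits the target as an $\widehat{\omega}$-sheaf.

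The hard part will be the divisor-level bookkeeping in the middle step: verifying that after the normalized base change and toroidal resolution the pair $(X',\Delta_{X'}-(f')^*\Delta_{Y'})$ admits the lc-with-klt-perturbation structure required by Remark~\ref{f-rem3.2}, and that the exceptional correction needed to make $L'$ Cartier can be absorbed without destroying the very-ampleness of $L'-k(K_{X'/Y'}+\Delta_{X'}-(f')^*\Delta_{Y'})$. The equidimensional toroidal hypothesis on $f$ together with the quotient-singularity hypothesis on $X$ are precisely what make these compatibilities come out cleanly.
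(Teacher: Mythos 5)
Your outline correctly identifies one of the two obstacles to applying Lemma \ref{f-lem3.1} directly, namely that $\Delta_X-f^*\Delta_Y$ need not be effective, and your fix (a cover of $Y$, a base change making the pullback of the vertical boundary reduced, and a generically isomorphic comparison of direct images that picks up the twist $\mathcal O_Y(\Delta_Y)$) is close in spirit to what the paper does; the paper obtains the reducedness of the pulled-back boundary by invoking Abramovich--Karu weak semistable reduction for $f':X'\to Y'$ rather than by a hand-made ramification-flattening cover. However, there is a genuine gap at the center of your argument: you never remove the twist $(k-1)H'$ from the conclusion of Lemma \ref{f-lem3.1}. That lemma only gives that $\omega_{Y'}((k-1)H')\otimes f'_*\mathcal O_{X'}(L')$ is an $\widehat\omega$-sheaf for an \emph{ample} $H'$; since $k\geq 2$ the twist cannot be taken to be zero, and knowing that a sheaf becomes an $\widehat\omega$-sheaf after an ample twist says nothing about the untwisted sheaf, because positivity goes the wrong way for such a deduction. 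Pushing your statement down by $\pi_*$ therefore yields at best that $\omega_Y(\Delta_Y+(\text{ample divisor depending on }k))\otimes f_*\mathcal O_X(L)$ is an $\widehat\omega$-sheaf, which is essentially Lemma \ref{f-lem3.1} again and misses the entire point of Theorem \ref{f-thm3.4}: the twist $\Delta_Y$ must be independent of $L$ and $k$. (In the proof of Theorem \ref{f-thm1.3} the theorem is applied to $L=mP+A$ with the role of $k$ played by $mk\to\infty$, so any $k$-dependent ample twist destroys the dimension count.)

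The missing mechanism is precisely how the paper chooses its Kawamata cover: $\tau:Y'\to Y$ is taken so that $\tau^*H=mH'$ with $m$ so large (namely $m>(k-1)/\alpha$, where $\alpha$ is chosen so that $L-k(K_{X/Y}+\Delta)-\alpha f^*H$ is still ample) that $L'-(k-1)f'^*H'-k(K_{X'/Y'}+\Delta')$ remains ample. One then applies Lemma \ref{f-lem3.1} to $L'-(k-1)f'^*H'$ rather than to $L'$, and the projection formula cancels the resulting twist $(k-1)H'$ against $-(k-1)f'^*H'$, leaving $\omega_{Y'}\otimes f'_*\mathcal O_{X'}(L')$ with no ample twist before descent. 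Without this step (or an equivalent one) your proof does not establish the theorem. Two smaller points: the comparison of direct images needed for the descent is $f'_*\mathcal O_{X'}(L')\hookrightarrow \tau^*(f_*\mathcal O_X(L)\otimes\mathcal O_Y(\Delta_Y))$, which is the opposite direction to the flat-base-change inclusion you wrote and is not formal (it is where the conductor of the normalization of $X\times_YY'$ enters and where the factor $\mathcal O_Y(\Delta_Y)$ actually comes from); and adding an effective exceptional correction to $L'$ would change $f'_*\mathcal O_{X'}(L')$ and break that comparison, so one should work with $L'=\lambda^*L$ on the normalization of the fibre product itself, where no correction is needed.
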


\begin{rem}
A key point of Theorem \ref{f-thm3.4} is that $\Delta_Y$ does not depend on 
$L$.  
\end{rem}

\begin{rem}
We note that $f_*\mathcal O_X(L)$ in Theorem \ref{f-thm3.4} is 
reflexive. This is because $\mathcal O_X(L)$ is 
a locally free sheaf on a normal variety 
$X$ and $f$ is equidimensional. 
For the details, see, for example, \cite[Corollary 
1.7]{hartshorne}. 
We also note that $f$ is flat because 
$f$ is equidimensional, 
$X$ is Cohen--Macaulay, and $Y$ is smooth.
\end{rem}

\begin{cor}\label{f-cor3.7}
In Theorem \ref{f-thm3.4}, there is an ample Cartier divisor $A'$ on $Y$ such that 
$\mathcal O_Y(A')\otimes f_*\mathcal O_X(L)$ is generically generated by 
global sections. Moreover $A'$ is independent of $L$ and 
depends only on $Y$ and $\Delta_Y$. 
\end{cor}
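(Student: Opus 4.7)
The plan is to derive this corollary as a direct consequence of Theorem \ref{f-thm3.4} combined with Corollary \ref{f-cor2.19}, with only minor bookkeeping needed to arrange ampleness and independence of $L$.

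First, I would apply Theorem \ref{f-thm3.4} to conclude that the sheaf
\[
\mathcal F := \omega_Y(\Delta_Y)\otimes f_*\mathcal O_X(L)
\]
is an $\widehat\omega$-sheaf on $Y$. Next, I would verify that $\mathcal F$ is reflexive: by the remark following Theorem \ref{f-thm3.4}, $f_*\mathcal O_X(L)$ is reflexive (since $\mathcal O_X(L)$ is locally free on the normal variety $X$ and $f$ is equidimensional), and since $Y$ is smooth, $\omega_Y(\Delta_Y)$ is an invertible sheaf, so the tensor product $\mathcal F$ remains reflexive.

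Then I would invoke Corollary \ref{f-cor2.19}: there exists an ample Cartier divisor $A$ on $Y$, \emph{depending only on $Y$}, such that for every reflexive $\widehat\omega$-sheaf $\mathcal G$ on $Y$ the sheaf $\mathcal G\otimes \mathcal O_Y(A)$ is generically generated by global sections. Applying this to $\mathcal G=\mathcal F$ yields that
\[
f_*\mathcal O_X(L)\otimes \mathcal O_Y(A+K_Y+\Delta_Y)
\]
is generically generated by global sections. Finally, set $A':=A+K_Y+\Delta_Y$; if this fails to be ample, replace $A$ by $A+mH$ for a fixed ample Cartier divisor $H$ on $Y$ and $m\gg 0$, which preserves the generic generation property and makes $A'$ ample. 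Since both $A$ (from Corollary \ref{f-cor2.19}) and $K_Y+\Delta_Y$ depend only on $Y$ and $\Delta_Y$, the resulting $A'$ is independent of $L$, as required.

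There is really no substantive obstacle here; the work is all contained in Theorem \ref{f-thm3.4} and in Lemma \ref{f-lem2.18}/Corollary \ref{f-cor2.19}. The only point that requires a moment's care is that Corollary \ref{f-cor2.19} needs a \emph{reflexive} $\widehat\omega$-sheaf as input, so one must explicitly check reflexivity of $f_*\mathcal O_X(L)$, which in turn rests on the equidimensionality of $f$ and the fact that $X$ has only quotient (hence rational, Cohen--Macaulay) singularities.
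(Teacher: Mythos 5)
Your proposal is correct and follows essentially the same route as the paper: apply Theorem \ref{f-thm3.4} to get that $\omega_Y(\Delta_Y)\otimes f_*\mathcal O_X(L)$ is a reflexive $\widehat\omega$-sheaf, invoke Corollary \ref{f-cor2.19} with its $L$-independent divisor $A$, and then adjust by an ample correction term to absorb $K_Y+\Delta_Y$ (the paper takes $A'=A+A_1$ with $A_1-K_Y-\Delta_Y$ very ample, which is the same bookkeeping as your $A+mH$ twist). The reflexivity check you flag is exactly the point the paper also records.
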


\begin{proof}
Let $A$ be an ample Cartier divisor on $Y$ as in Corollary \ref{f-cor2.19}. 
Then $\mathcal O_Y(A)\otimes \omega_Y(\Delta_Y)\otimes f_*\mathcal O_X(L)$ is 
generically generated by global sections. 
Let $A_1$ be an ample Cartier divisor on $Y$ such that 
$A_1-K_Y-\Delta_Y$ is very ample. Then 
$A'=A+A_1$ is the desired ample Cartier divisor on $Y$. 
Note that $f_*\mathcal O_X(L)$ is reflexive. 
\end{proof}

Let us prove Theorem \ref{f-thm3.4}. 

\begin{proof}[Proof of Theorem \ref{f-thm3.4}]
We take an ample Cartier divisor $H$ on $Y$ such that 
$H=A_1-A_2$, where $A_1$ and $A_2$ are both smooth general very ample 
divisors on $Y$. 
Let $\tau:Y'\to Y$ be a finite Kawamata cover 
from a smooth projective variety $Y'$ such that 
$\tau^*H=mH'$ for some Cartier divisor 
$H'$ on $Y'$ with $m\gg 0$. 
We put 
$$
\xymatrix{
X'\ar[r]^{p}\ar[dr]_{f'}&\widetilde X \ar[r]^{q} \ar[d]^{\widetilde f} & X\ar[d]^{f} \\
  & Y' \ar[r]_{\tau} & Y
} 
$$ 
and $\lambda=q\circ p$, where $\widetilde X=X\times _YY'$ and 
$X'$ is the normalization of $\widetilde X$. 
We may assume that $f':X'\to Y'$ is a weak semistable reduction 
by \cite[Proposition 5.1 and Proposition 5.10]{abramovich-karu}. 
We put $L'=\lambda^*L$. 
Since $L-k(K_{X/Y}+\Delta_X-f^*\Delta_Y)$ is very ample, 
we may assume that $$L=k(K_{X/Y}+\Delta_X-f^*\Delta_Y)+B$$ 
where $B$ is a general smooth very ample divisor on $X$.  
Then, by the 
arguments for the proof of 
\cite[Lemma 10.4 and Lemma 10.5]{fujino}, 
there exists a generically isomorphic injection 
\begin{equation}\label{eq3.1}
f'_*\mathcal O_{X'}(L')\hookrightarrow \tau^*(f_*\mathcal O_X(L)\otimes 
\mathcal O_Y(\Delta_Y)). 
\end{equation}
We put $\Delta=\Delta_X-f^*\Delta_Y$ and define $\Delta'$ by 
$$
k(K_{X'/Y'}+\Delta')=\lambda^*k(K_{X/Y}+\Delta). 
$$
Since $\tau:Y'\to Y$ is a finite 
Kawamata cover, 
we can write $\Delta=\Sigma_X-f^*\Sigma_Y$ such that 
$\Sigma_Y$ is a simple normal crossing 
divisor on $Y$, $\Delta_Y\leq \Sigma_Y$, and $\tau$ is 
\'etale over $Y\setminus \Sigma_Y$. 
Then we have $K_{X'}+\Sigma_{X'}=\lambda ^* 
(K_X+\Sigma_X)$ and 
$K_{Y'}+\Sigma_{Y'}=\tau^*(K_Y+\Sigma_Y)$ such that 
$\Sigma_{X'}$ and $\Sigma_{Y'}$ are effective 
and reduced. 
Of course, $\Delta'=\Sigma_{X'}-f'^*\Sigma_{Y'}$. 
By construction, $\Supp \Sigma_{X'}\supset \Supp 
f'^*\Sigma_{Y'}$. 
Since $f'$ is weakly semistable, 
$f'^*\Sigma_{Y'}$ is reduced. 
Therefore, $\Sigma_{X'}\geq f'^*\Sigma_{Y'}$. 
This means that $\Delta'=\Sigma_{X'}-f'^*
\Sigma_{Y'}$ is effective. 
We can find a positive rational number $\alpha$ 
such that $$L-k(K_{X/Y}+\Delta)-
\alpha f^*H$$ is ample. 
Let $\tau:Y'\to Y$ be the finite Kawamata cover as above for 
$m>(k-1)/\alpha$ and let $H'$ be the same ample divisor as above. 
Then 
$$
L'-k(K_{X'/Y'}+\Delta')-(k-1)f'^*H'=\lambda^*\left(L-
k(K_{X/Y}+\Delta)-\frac{k-1}{m}
f^*H\right)
$$ 
is ample. 
We apply Lemma \ref{f-lem3.1} to 
$L'-(k-1)f'^*H'$ (see also Remark \ref{f-rem3.2}).  
Thus $\omega_{Y'}\otimes f'_*\mathcal O_{X'}(L')$ is an $\widehat \omega$-sheaf. 
Let $G$ be the Galois group of $\tau:Y'\to Y$. 
By the proof of Lemma \ref{f-lem3.1} (see the proof of 
\cite[V.3.34.~Lemma]{nakayama}), 
we can make everything $G$-equivariant and 
have an $\omega$-sheaf $\mathcal F'$ and a generically 
isomorphic $G$-equivariant injection 
$$
\mathcal F'\hookrightarrow \omega_{Y'}\otimes f'_*\mathcal O_{X'}(L'). 
$$ 
Hence there is a generically isomorphic injection 
$$
\mathcal F\hookrightarrow \omega_Y
\otimes f_*\mathcal O_X(L)\otimes \mathcal O_Y(\Delta_Y)
$$ 
from a direct summand $\mathcal F$ of $\tau_*\mathcal F'$. 
Therefore, 
$\omega_Y(\Delta_Y)\otimes f_*\mathcal O_X(L)$ is an $\widehat \omega$-sheaf. 
\end{proof}

Let us prove Theorem \ref{f-thm1.3}. 

\begin{proof}[Proof of Theorem \ref{f-thm1.3}]
Without loss of 
generality, we may assume that $\kappa_\sigma(F, K_F+D_X|_F)\ne -\infty$. 
By \cite[Theorem 2.1, Proposition 4.4, and 
Remark 4.5]{abramovich-karu}, we 
may assume that $f:X\to Y$ satisfies 
the conditions (i)--(v) in Theorem \ref{f-thm3.4}. 
We may also assume that $D_X\subset X\setminus U_X$ and 
$D_Y\subset Y\setminus U_Y$. 
We take $\Delta_X=\Supp (D_X+f^*\Delta_Y)$. 
Then $\Delta_X$ satisfies the conditions (vi) and 
(vii) in Theorem \ref{f-thm3.4}. 
We put 
$$
P=k(K_{X/Y}+\Delta_X-f^*\Delta_Y)
$$ 
and 
$$
D=k(K_{X/Y}+D_X-f^*D_Y)
$$ 
where $k$ is a positive integer $\geq 2$ such that 
$D$ and $P$ are both Cartier. 
We take a very ample Cartier divisor $A$ on $X$. 
We put 
$$
r(mD; A)=\rank f_*\mathcal O_X(mD+A). 
$$ 
Since $D=P$ over the generic point of $Y$, 
$$
r(mD; A)=\rank f_*\mathcal O_X(mP+A). 
$$ 
Note that 
$$
\sigma (D|_F; A|_F)=\max \left\{
k\in \mathbb Z_{\geq 0} \cup \{-\infty\}\, 
\left|\, \underset{m\to \infty}{\limsup}\frac{r(mD; A)}{m^k}>0\right.\right\} 
$$
for a sufficiently general fiber $F$ of $f:X\to Y$. 
We also note that 
\begin{align*}
\kappa _\sigma (F, K_F+D_X|_F)
&=\kappa_\sigma(F, D|_F)\\&=\max\{\sigma(D|_F; A|_F)\, |\, {\text{$A$ is very ample}}\}. 
\end{align*}
Since $f_*\mathcal O_X(mP+A)\otimes \omega_Y(\Delta_Y)$ is a reflexive 
$\widehat{\omega}$-sheaf for every 
positive integer $m$ by Theorem \ref{f-thm3.4}, 
there is an ample Cartier divisor $H$ on $Y$ such that 
we have a generically isomorphic injection 
$$
\mathcal O_Y^{\oplus r(mD; A)}\hookrightarrow 
\mathcal O_Y(H)\otimes f_*\mathcal O_X(mP+A)
$$ 
for every $m\geq 1$ (see Corollary \ref{f-cor3.7}). 
Therefore, we have generically isomorphic injections 
\begin{align*}
&\mathcal O_Y(mk(K_Y+D_Y)+H)^{\oplus r(mD; A)}\\
&\hookrightarrow \mathcal O_Y(mk(K_Y+D_Y)+2H)\otimes 
f_*\mathcal O_X(mP+A)\\
&\hookrightarrow 
\mathcal O_Y(mk(K_Y+D_Y)+2H)\otimes 
f_*\mathcal O_X(mD+A). 
\end{align*}
This implies that 
\begin{align*}
&\dim H^0(X, \mathcal O_X(mk(K_X+D_X)+A+2f^*H))\\
&\geq r(mD; A)\cdot \dim H^0(Y, \mathcal O_Y(mk(K_Y+D_Y)+H)). 
\end{align*} 
We assume that $H$ is sufficiently ample and 
that $A$ is also sufficiently ample. 
Then, by Lemma \ref{f-lem2.8}, 
we can find a constant $C$ such that 
\begin{align*}
&r(mD; A)\cdot \dim H^0(Y, \mathcal O_Y(mk(K_Y+D_Y)+H))
\\ 
&\geq Cm^{\kappa_{\sigma}(F, D|_F)+\kappa _{\sigma}(Y, K_Y+D_Y)}
\end{align*}
for every sufficiently large $m$.  
Hence we have 
$$
\kappa_\sigma(X, K_X+D_X)\geq \kappa_\sigma(F, K_F+D_X|_F)
+\kappa_\sigma(Y, K_Y+D_Y). 
$$
This is the desired inequality. 
\end{proof}

We give a remark on Nakayama's proof of \cite[V.4.1.~Theorem]
{nakayama} for the reader's convenience. 

\begin{rem}\label{f-rem3.8}
The proof of \cite[V.4.1.~Theorem]{nakayama} 
is insufficient. 
We think that we need the inequality 
as in \cite[Theorem 6.7 (7)]{lehmann} 
(see \cite[2.8, 2.10, and Theorem 0.2]{eckl} 
and Lemma \ref{f-lem2.8}). 

Let $D$ be a pseudo-effective $\mathbb R$-divisor on a 
smooth projective variety $X$. 
Then the inequality in \cite[Theorem 6.7 (7)]{lehmann} 
says that 
$$
\kappa_{\sigma} (X, D)=\lim_{m\to \infty} 
\frac{\log \dim H^0(X, \mathcal O_X(\lfloor mD\rfloor +A))}
{\log m}
$$
where $A$ is a sufficiently ample Cartier divisor on $X$. 
This useful characterization is not in \cite{nakayama}. 

From now on, 
we freely use the notation in the proof of \cite[V.4.1.~Theorem]
{nakayama}. Nakayama proved the following inequality 
\begin{equation}\label{f-eq3.2}
h^0(X, \lceil m(D+f^*Q)\rceil +A+2f^*H)
\geq r(mD; A) \cdot h^0(Y, \lfloor mQ\rfloor +H)
\end{equation}
in the proof of \cite[V.4.1.~Theorem (1)]{nakayama}. 
We think that we need \cite[Theorem 6.7 (7)]{lehmann} 
(see also \cite[2.8, 2.10, and Theorem 0.2]{eckl}), which can not 
directly follow from the results in \cite{nakayama}, 
to obtain 
$$
\kappa_{\sigma}(D+f^*Q)\geq \kappa _\sigma(D; X/Y)
+\kappa_{\sigma}(Q)
$$ 
from the inequality \eqref{f-eq3.2}. 
The same trouble is in the proof of \cite[V.4.1.~Theorem (2)]
{nakayama}. 
\end{rem}

We close this section with a sketch of the 
proof of Theorem \ref{f-thm1.9}. 
We leave the details as an 
exercise for the reader. 

\begin{proof}[Sketch of the proof of Theorem \ref{f-thm1.9}] 
Here, we will 
only explain how to modify the proof of Theorem \ref{f-thm1.3} 
for Theorem \ref{f-thm1.9}. 
First, we note that we can easily check that Theorem \ref{f-thm3.4} 
holds true even when the coefficients of the horizontal part 
of $\Delta_X$ are in $[0, 1]\cap \mathbb Q$. 
All we have to do is to check the 
generically isomorphic injection \eqref{eq3.1}
$$
f'_*\mathcal O_{X'}(L')\hookrightarrow 
\tau^*(f_*\mathcal O_X(L)\otimes 
\mathcal O_Y(\Delta_Y)). 
$$ exists when the horizontal part of 
$\Delta_X$ is not necessarily reduced 
in the proof of Theorem \ref{f-thm3.4} 
(see the arguments for the proof of 
\cite[Lemma 10.4 and Lemma 10.5]{fujino}). 
Next, 
by \cite[Theorem 2.1, Proposition 4.4, and 
Remark 4.5]{abramovich-karu}, 
we may assume that $f:X\to Y$ satisfies the conditions 
(i)--(v) in Theorem \ref{f-thm3.4}. 
For the proof of Theorem \ref{f-thm1.9}, 
we may further assume that the coefficients of the 
vertical part of $D_X$ are one by replacing 
$D_X$ with $D^\mathrm{h}_X+\lfloor D^\mathrm{v}_X\rfloor$, 
where 
$D^\mathrm{h}_X$ (resp.~$D^\mathrm{v}_X$) 
is the horizontal (resp.~vertical) 
part of $D_X$. Then we put $\Delta_X=D^\mathrm{h}_X
+\Supp f^*\Delta_Y$. 
Finally, the proof of Theorem \ref{f-thm1.3} works for 
Theorem \ref{f-thm1.9} by the generalization of 
Theorem \ref{f-thm3.4} discussed above. 
\end{proof}

We strongly recommend the interested reader to 
see \cite[V.~\S4]{nakayama} for various 
related results. 

\section{Minimal model program for affine varieties}\label{f-sec4}

In this section, we discuss the minimal model program for affine varieties and 
prove Corollary \ref{f-cor1.6} as an application. 

Let us start with Yoshinori Gongyo's observation. 
Proposition \ref{f-prop4.1} says that the minimal model program works well for 
affine varieties. 

\begin{prop}[Yoshinori Gongyo]\label{f-prop4.1}
Let $V$ be an affine variety. 
We can take a pair $(\overline W, \overline D)$ as in 
Definition \ref{f-def2.4} such that 
$$
\kappa_{\sigma}(\overline W, K_{\overline W}+\overline D)=\kappa 
(\overline W, K_{\overline W}+\overline D)=\overline \kappa (V). 
$$
\end{prop}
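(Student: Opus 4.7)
The plan is to choose the compactification so that the boundary $\overline D$ is big, and then to invoke the minimal model program for klt pairs with big boundary (BCHM) to force $\kappa_\sigma(\overline W, K_{\overline W}+\overline D)=\kappa(\overline W, K_{\overline W}+\overline D)$; the equality with $\overline\kappa(V)$ is then automatic from Definition~\ref{f-def2.4}.

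First I would choose $(\overline W,\overline D)$ as in Definition~\ref{f-def2.4} such that some $\mathbb Q$-divisor $A'=\sum a_i D_i$ with $0<a_i\le 1$ and support on $\overline D$ is ample on $\overline W$. Indeed, embedding $V\hookrightarrow \mathbb A^N\subset \mathbb P^N$, taking the closure, and taking a log resolution, the pullback of a hyperplane section is ample and supported in the boundary; after adding a small multiple of $\overline D$ and rescaling we arrange every component of $\overline D$ to appear with coefficient in $(0,1]$. Writing $B':=\overline D-A'\ge 0$ and picking a general $\mathbb Q$-divisor $A''\sim_{\mathbb Q} A'$ whose support avoids $\overline D$, I would set
\[
\Delta := (1-\varepsilon)\overline D + \varepsilon A'' + \varepsilon B'
\]
for small rational $\varepsilon>0$. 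The coefficient of $\Delta$ at $D_i$ is $1-\varepsilon a_i<1$, the additional general components carry small coefficients, and the support is SNC; hence $(\overline W,\Delta)$ is klt. Moreover $\Delta\sim_{\mathbb Q}\overline D$ is big (it contains the ample summand $\varepsilon A''$), and $K_{\overline W}+\Delta\sim_{\mathbb Q}K_{\overline W}+\overline D$, so the Iitaka and numerical Kodaira dimensions of the two log canonical divisors agree.

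Next I would apply BCHM to the klt pair $(\overline W,\Delta)$ with big boundary. If $K+\Delta$ is not pseudo-effective, then $\kappa_\sigma(\overline W,K+\Delta)=-\infty$ by \cite[V.1.4.~Corollary]{nakayama}, and hence $\kappa=-\infty$ too. Otherwise $(\overline W,\Delta)$ admits a good minimal model $(X',\Delta')$ on which $K_{X'}+\Delta'$ is semi-ample, so $\kappa_\sigma(X',K+\Delta')=\kappa(X',K+\Delta')$. Transporting this equality back along a common log resolution of $\overline W$ and $X'$ yields $\kappa_\sigma(\overline W,K+\Delta)=\kappa(\overline W,K+\Delta)$, and the $\mathbb Q$-linear equivalence with $K+\overline D$ together with $\kappa(\overline W,K+\overline D)=\overline\kappa(V)$ completes the argument.

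The main obstacle I anticipate is this descent step in the good minimal model case: one must know that both $\kappa$ and $\kappa_\sigma$ are preserved under the klt MMP modifications produced by BCHM. This rests on the negativity lemma (the pullbacks on a common resolution of $(\overline W,\Delta)$ and $(X',\Delta')$ differ only by effective exceptional divisors) together with the standard invariance of $\kappa$ and $\kappa_\sigma$ under addition of effective exceptional divisors on a smooth model. The construction of the big klt boundary in the first step is routine once one knows that the complement of an affine open in a projective variety supports an ample divisor.
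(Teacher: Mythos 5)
Your proposal follows essentially the same route as the paper: compactify via $\mathbb P^N$, replace $\overline D$ by a $\mathbb Q$-linearly equivalent big klt boundary, run BCHM, and transport $\kappa=\kappa_\sigma$ back from the good minimal model (or handle the non-pseudo-effective case trivially). However, one step as written is false: after the log resolution $\mu:\overline W\to\overline V$, the pullback $\mu^*H$ of the ample divisor $H$ with $\Supp H=\overline V\setminus V$ is \emph{not} ample (a pullback under a nontrivial birational morphism is never ample), only semi-ample and big; so your claim that some ample $A'$ supported on $\overline D$ exists is not justified, and it is not clear one can always arrange this on a smooth SNC compactification. Fortunately ampleness is never actually used in your argument: bigness of $A'$ gives bigness of $\Delta$, and semi-ampleness of $A'$ is what lets you choose a general member $A''\sim_{\mathbb Q}A'$ (which will meet $\overline D$, so ``support avoids $\overline D$'' should be weakened to ``has no common component with $\overline D$ and keeps the pair klt,'' by Bertini applied to the free linear system). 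With ``ample'' replaced by ``semi-ample and big'' throughout, your construction becomes exactly the paper's: there one sets $B\sim_{\mathbb Q}\varepsilon\mu^*H$ general effective with $(\overline W,(\overline D-\varepsilon\mu^*H)+B)$ klt and big boundary. The concluding descent via the negativity lemma and invariance of $\kappa$ and $\kappa_\sigma$ under adding effective exceptional divisors is the same as the paper's assertion that both invariants are preserved in each MMP step.
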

\begin{proof}
We take an embedding $V\subset \mathbb A^N$. 
Let $\overline V$ be the closure of $V$ in $\mathbb P^N$. 
Then there is an effective ample Cartier divisor 
$H$ on $\overline V$ such that 
$\Supp H=\overline V\setminus V$. 
We take a resolution $\mu:\overline W\to \overline V$ as in Definition \ref{f-def2.4}. 
Then $\mu^*H$ is an effective Cartier divisor such that 
$\Supp \mu^*H=\Supp \overline D$. 
Let $\varepsilon$ be a small positive rational number 
such that $\overline D-\varepsilon \mu^*H$ is effective. 
Since $\mu^*H$ is semi-ample, 
we can take an effective 
$\mathbb Q$-divisor $B$ on $\overline W$ such that 
$B\sim _{\mathbb Q}\varepsilon \mu^*H$ and 
that $(\overline W, (\overline D-\varepsilon \mu^*H)+B)$ is klt. 
Note that 
$$
K_{\overline W}+\overline D\sim _{\mathbb Q}K_{\overline W}+(\overline D
-\varepsilon \mu^*H)+B
$$ 
and that $(\overline D-\varepsilon \mu^*H)+B$ is big. 
By \cite[Theorem 1.1, Corollary 1.3.3, Corollary 3.9.2]{bchm}, 
$(\overline W, \overline D)$ has a good minimal model or a Mori fiber 
space structure. 
Hence, we obtain 
$
\kappa_{\sigma}(\overline W, K_{\overline W}+\overline D)=\kappa 
(\overline W, K_{\overline W}+\overline D)=\overline \kappa (V)$.  
More precisely, by running a minimal model 
program with ample scaling, 
we have a finite sequence of flips and divisorial contractions 
$$
(\overline W, \overline D)=(\overline W_0, \overline D_0)
\dashrightarrow (\overline W_1, \overline D_1)
\dashrightarrow 
\cdots \dashrightarrow (\overline W_k, \overline D_k)
$$
such that $(\overline W_k, \overline D_k)$ is a good minimal model 
or has a Mori fiber space structure. 
Therefore, $\kappa (\overline W_k, K_{\overline W_k}+\overline D_k)=
\kappa_\sigma (\overline W_k, K_{\overline W_k}+\overline D_k)$ holds. 
Note that in each step of the minimal model program $\kappa$ and 
$\kappa_\sigma$ are preserved. 
Thus, we obtain $\kappa_\sigma(\overline W, K_{\overline W}+\overline D)
=\kappa (\overline W, K_{\overline W}+\overline D)$. 
\end{proof}

\begin{rem}[Logarithmic canonical ring]\label{f-rem4.2}
Let $V$ be an affine variety and let $(\overline W, \overline D)$ be a 
pair as in Definition \ref{f-def2.4}. 
We put 
$$\overline R(V)=
\bigoplus _{m\geq 0}H^0(\overline W, \mathcal O_{\overline W}
(m(K_{\overline W}+\overline D)))
$$
and call it the {\em{logarithmic canonical ring}} of $V$. 
It is well-known and is easy to see that $\overline R(V)$ is independent of 
the pair $(\overline W, \overline D)$ and is well-defined. 
Then $\overline R(V)$ is a finitely generated $\mathbb C$-algebra. 
This is because 
we can choose $(\overline W, \overline D)$ such that 
it has a good minimal model or a Mori fiber space 
structure as we saw in the proof of Proposition \ref{f-prop4.1}.  
\end{rem}

Note that Conjecture \ref{f-conj1.4} follows from the minimal model program and 
the abundance conjecture. 

\begin{prop}\label{f-prop4.3}
Let $X$ be a smooth 
projective variety and let $D_X$ be a simple 
normal crossing 
divisor on $X$. 
Assume that the minimal model program and 
the abundance conjecture hold for $(X, D_X)$. 
Then 
we have 
$$
\kappa (X, K_X+D_X)=\kappa _\sigma(X, K_X+D_X). 
$$
In particular, if $\dim X\leq 3$, then we have 
$$
\kappa (X, K_X+D_X)=\kappa _\sigma(X, K_X+D_X). 
$$
\end{prop}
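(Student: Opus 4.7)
The approach is to follow the minimal model theoretic strategy already used in the proof of Proposition \ref{f-prop4.1}. Since $\kappa(X, K_X + D_X) \leq \kappa_\sigma(X, K_X + D_X)$ holds by definition (see Definition \ref{f-def2.7}), only the reverse inequality needs attention. I would first dispose of the case where $K_X + D_X$ is not pseudo-effective: then $\kappa_\sigma(X, K_X + D_X) = -\infty$ by \cite[V.1.4.~Corollary]{nakayama}, so both invariants equal $-\infty$ and the equality is automatic. From now on assume $K_X + D_X$ is pseudo-effective.

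Next, I would invoke the log MMP assumed for the dlt (in fact SNC) pair $(X, D_X)$ to produce a finite sequence
\[
(X, D_X) = (X_0, D_0) \dashrightarrow (X_1, D_1) \dashrightarrow \cdots \dashrightarrow (X_k, D_k)
\]
of divisorial contractions and flips ending in a log minimal model, so that $K_{X_k} + D_k$ is nef. For each step, I would take a common log resolution $p \colon W \to X_i$ and $q \colon W \to X_{i+1}$ and apply the negativity lemma to get
\[
p^{\ast}(K_{X_i} + D_i) = q^{\ast}(K_{X_{i+1}} + D_{i+1}) + E
\]
with $E$ effective and $q$-exceptional. Twisting by an effective exceptional divisor does not affect the dimension of spaces of sections (even after adding an ample divisor pulled back from below), so both $\kappa$ and $\kappa_\sigma$ are preserved along the MMP. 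Thus it suffices to verify the equality on $(X_k, D_k)$.

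On the minimal model, the assumed abundance conjecture says $K_{X_k} + D_k$ is semi-ample. For a semi-ample divisor the Iitaka $D$-dimension, the numerical dimension, and Nakayama's $\kappa_\sigma$ all coincide with the dimension of the image of the associated Iitaka fibration, hence $\kappa(X_k, K_{X_k} + D_k) = \kappa_\sigma(X_k, K_{X_k} + D_k)$, and chaining back through the MMP yields the result for $(X, D_X)$. The final assertion for $\dim X \leq 3$ is immediate, since the log MMP and log abundance theorem are known unconditionally for dlt pairs in dimension at most three (by work of Kawamata, Shokurov, Keel--Matsuki--McKernan, and others). The main point requiring care in this sketch is the invariance of $\kappa_\sigma$ under flips, not under divisorial contractions or birational morphisms; but this is a standard consequence of Nakayama's formulation of $\kappa_\sigma$ together with the negativity lemma, and is the kind of step explicitly relied upon in the closing lines of the proof of Proposition \ref{f-prop4.1}.
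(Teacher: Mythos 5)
Your proof is correct and follows essentially the same route as the paper's: run the $(K_X+D_X)$-MMP, note that $\kappa$ and $\kappa_\sigma$ are preserved at each step (the paper asserts this without the common-resolution/negativity-lemma justification you supply), and conclude via abundance on the resulting good minimal model. The only (harmless) deviation is that you dispose of the non-pseudo-effective case directly from the equivalence $\kappa_\sigma(X,K_X+D_X)\geq 0 \Leftrightarrow K_X+D_X$ pseudo-effective, whereas the paper invokes the Mori fiber space structure there; both work.
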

\begin{proof}
We run the minimal model program. 
If $K_X+D_X$ is pseudo-effective, then $(X, D_X)$ 
has a good minimal model. 
If $K_X+D_X$ is not pseudo-effective, 
then $(X, D_X)$ has a Mori fiber space structure. 
Anyway, we obtain $\kappa (X, K_X+D_X)=\kappa _\sigma
(X, K_X+D_X)$ (see also the proof of Proposition \ref{f-prop4.1}). 
Note that in each step of the minimal model program 
$\kappa$ and $\kappa _\sigma$ are preserved. 
\end{proof}

\begin{proof}[Proof of Corollary \ref{f-cor1.5}] 
This is obvious by Theorem \ref{f-thm1.3} and Proposition \ref{f-prop4.3}. 
\end{proof}

Let us prove Corollary \ref{f-cor1.6}. 

\begin{proof}[Proof of Corollary \ref{f-cor1.6}]
We take the following commutative diagram: 
$$
\xymatrix{
V \ar@{^{(}->}[r] \ar[d]_g & V'\ar[d]_{h}&V'\ar[d]\ar@{=}[l]&X\ar[d]^{f}\ar[l]_\alpha \\
W \ar@{^{(}->}[r] &W'&W''\ar[l]&Y\ar[l]_{\beta}
}
$$
such that $h:V'\to W'$ is a compactification of $g:V\to W$, 
$V'\to W''\to W'$ is the Stein factorization of $V'\to W'$, 
$\alpha$ and $\beta$ are 
suitable resolutions. 
We can take a simple normal crossing divisor $D_X$ on $X$ such that 
$$\overline \kappa (V)=\kappa(X, K_X+D_X)=\kappa_\sigma (X, K_X+D_X)$$ 
by Proposition \ref{f-prop4.1}. 
We have a simple normal crossing divisor $D_Y$ on $Y$ such that 
$\Supp f^*D_Y\subset \Supp D_X$ and 
$$\overline \kappa (W)\leq 
\kappa (Y, K_Y+D_Y)\leq \kappa _{\sigma}(Y, K_Y+D_Y) $$ 
by Lemma \ref{f-lem2.14}. 
Then, by Theorem \ref{f-thm1.3}, 
we obtain 
\begin{align*}
\overline{\kappa}(V)=\kappa(X, K_X+D_X)&=\kappa_{\sigma}(X, K_X+D_X)\\&\geq 
\kappa _{\sigma}(F, K_F+D_X|_F)+\kappa_{\sigma}(Y, K_Y+D_Y)\\
&\geq 
\kappa (F, K_F+D_X|_F)+\kappa(Y, K_Y+D_Y)\\
&\geq \overline{\kappa}(F')+\overline{\kappa}(W)
\end{align*} 
where $F$ is a sufficiently general fiber of $f:X\to Y$. 
Note that $$\overline{\kappa}(F')=\kappa(F, K_F+D_X|_F). $$ 
Therefore, we obtain the desired inequality of the logarithmic Kodaira dimension. 
\end{proof}

\begin{rem}\label{f-rem4.4} 
If $g:V \dashrightarrow W$ is a strictly rational dominant 
map, then we can take a proper birational morphism 
$\mu:\widetilde V\to V$ such that 
$g\circ \mu:\widetilde V\to W$ is a morphism. 
By applying the proof of Corollary \ref{f-cor1.6} to 
$g\circ \mu:\widetilde V\to W$, we have 
$\overline \kappa (V)\geq \overline \kappa (F')+\overline 
\kappa (W)$ as pointed out in Remark \ref{f-rem1.7}. 
\end{rem}

We close this paper with a remark on Conjecture 
\ref{f-conj2.5} (see Conjecture \ref{f-conj1.1}). 

\begin{rem}\label{f-rem4.5}
By the proof of Corollary \ref{f-cor1.6}, 
we see that Conjecture \ref{f-conj2.5} 
(see Conjecture \ref{f-conj1.1}) 
follows from $\kappa (X, K_X+D_X)=\kappa _\sigma(X, K_X+D_X)$. 
Moreover, the equality 
$\kappa (X, K_X+D_X)=\kappa _\sigma(X, K_X+D_X)$ 
follows from 
the minimal model program and the abundance conjecture 
for $(X, D_X)$ by Proposition \ref{f-prop4.3}. 
Therefore, Conjecture \ref{f-conj2.5} (see Conjecture \ref{f-conj1.1}) 
now becomes a consequence of the minimal 
model program and the abundance conjecture by Theorem \ref{f-thm1.3}. 
This fact strongly supports Conjecture \ref{f-conj2.5} (see 
Conjecture \ref{f-conj1.1} and Conjecture \ref{f-conj1.2}). 
\end{rem}



\begin{thebibliography}{BCHM}

\bibitem[AK]{abramovich-karu} 
D.~Abramovich, K.~Karu, 
Weak semistable reduction in characteristic $0$, 
Invent. Math. {\textbf{139}} (2000), no. 2, 241--273. 

\bibitem[BCHM]{bchm}
C.~Birkar, P.~Cascini, C.~D.~Hacon, J.~M\textsuperscript{c}Kernan, 
Existence of minimal models for varieties of log general type, 
J. Amer. Math. Soc. {\textbf{23}} (2010), no. 2, 405--468.

\bibitem[C]{choi} 
S.~Choi, The geography of log models and its applications, 
Thesis (Ph.D.)---The Johns Hopkins University. 2008.

\bibitem[E]{eckl} 
T.~Eckl, Numerical analogues of the Kodaira dimension and 
the abundance conjecture, to appear in Manuscripta 
Mathematica. arXiv:1505.01262 

\bibitem[F1]{fujino-fund}
O.~Fujino, 
Fundamental theorems for the log minimal model program, 
Publ. Res. Inst. Math. Sci. {\textbf{47}} (2011), no. 3, 727--789. 

\bibitem[F2]{fujino} 
O.~Fujino, Notes on the weak positivity theorems, 
to appear in Adv. Stud. Pure Math. 

\bibitem[F3]{fujino-foundation} 
O.~Fujino, Foundation of the minimal model program, preprint (2014). 

\bibitem[FG]{fujino-gongyo}
O.~Fujino, Y.~Gongyo, On log canonical rings, 
to appear in Adv. Stud. Pure Math. 

\bibitem[H]{hartshorne} 
R.~Hartshorne, 
Stable reflexive sheaves, 
Math. Ann. {\textbf{254}} (1980), no. 2, 121--176. 

\bibitem[I1]{iitaka} 
S.~Iitaka, On logarithmic Kodaira dimension of algebraic varieties, 
{\em{Complex analysis and algebraic geometry}}, pp.~175--189, 
Iwanami Shoten, Tokyo, 1977. 

\bibitem[I2]{iitaka2}
S.~Iitaka, {\em{Birational geometry for open varieties}}, 
S\'eminaire de Math\'ematiques Sup\'erieures, {\textbf{76}}. Presses 
de l'Universit\'e de Montr\'eal, Montreal, Que., 1981. 

\bibitem[I3]{iitaka3} 
S.~Iitaka, {\em{Algebraic geometry. An introduction to 
birational geometry of algebraic varieties}}, 
Graduate Texts in Mathematics, {\textbf{76}}. North-Holland 
Mathematical Library, 24. Springer-Verlag, New York-Berlin, 1982. 

\bibitem[K]{kawamata} 
Y.~Kawamata, 
Minimal models and the Kodaira dimension of algebraic fiber 
spaces, J. Reine Angew. Math. {\textbf{363}} (1985), 1--46.

\bibitem[L]{lehmann}
B.~Lehmann, 
Comparing numerical dimensions, 
Algebra Number Theory {\textbf{7}} (2013), no. 5, 1065--1100.

\bibitem[N]{nakayama} 
N.~Nakayama, {\em{Zariski-decomposition and abundance}}, 
MSJ Memoirs, {\textbf{14}}. Mathematical Society of Japan, Tokyo, 2004. 

\bibitem[V]{viehweg}
E.~Viehweg, 
Weak positivity and the additivity of the Kodaira dimension for certain fibre spaces, 
{\em{Algebraic varieties and analytic varieties (Tokyo, 1981)}}, 329--353, 
Adv. Stud. Pure Math., {\textbf{1}}, North-Holland, Amsterdam, 1983. 
\end{thebibliography}
\end{document}